\newtheorem{theorem}{Theorem}
\theoremstyle{plain}
\newtheorem{example}{Example}
\newtheorem{lemma}{Lemma}
\newtheorem{remark}{Remark}
\numberwithin{equation}{section}
\DeclareMathOperator{\dif }{d}
\DeclareMathOperator{\ord }{Ord}
\title{ A class of difference schemes uniformly convergent on a modified Bakhvalov mesh}
\author{Samir Karasulji{\'c}\footnote{corresponding author}, Helena Zarin, Enes  Duvnjakovi{\'c}}
\date{}
\begin{document}

\maketitle

\pagestyle{myheadings}
\markboth{A class of difference schemes\ldots}{S. Karasulji{\'c}, H. Zarin, E. Duvnjakovi{\'c}}

\begin{abstract}
	In this paper we consider the numerical solution of a singularly perturbed one-dimensional semilinear reaction-diffusion problem.

A class of differential schemes is constructed. There is a proof of the existence and uniqueness of the numerical solution for this constructed class of differential schemes. 
The central result of the paper is an $\varepsilon$--uniform convergence of the second order $\mathcal{O}\left(1/N^2 \right),$ for the discrete approximate solution on the modified Bakhvalov mesh. At the end of the paper there are numerical experiments, two representatives of the class of differential schemes are tested and it is shown the robustness of the method and concurrence of theoretical and experimental results.
\end{abstract}

\begin{quotation}
\noindent{\bf Key Words}: {Singular perturbation, nonlinear, boundary layer, Bakhvalov mesh, layer-adapted mesh, uniform convergence.}

\noindent{\bf 2010 Mathematics Subject Classification}:  Primary 65L10. Secondary 65L11, 65L50.
\end{quotation}

\thispagestyle{empty}


\section{Introduction}
 \label{sekcija1}
 \setcounter{theorem}{0}

 We consider the boundary value problem
\begin{equation}
\varepsilon^2y''(x)=f(x,y)\:\text{ on }\:\left[ 0,1\right],
\label{uvod1}
\end{equation}
\begin{equation}
y(0)=0,\:y(1)=0,
\label{uvod2}
\end{equation}
where $0<\varepsilon<1$ is a perturbation parameter  and  $f$ is a non-linear function. We assume that the nonlinear function $f$ is continuously differentiable, 
i.e. for $k\geqslant 2,\:f\in C^{k}\left([0,1]\times\mathbb{R} \right),$ and that it has a strictly positive derivative with respect to $y$
\begin{equation}
\dfrac{\partial f}{\partial y}=f_y\geq m>0\:\text{ on }\:\left[0,1\right]\times \mathbb{R}\:\:(m=const).
\label{uvod3}
\end{equation}
The boundary value problem  \eqref{uvod1}--\eqref{uvod2} under the condition  \eqref{uvod3} has a unique solution, see Lorenz \cite{lorenz1982stability}.
Differential equations with the small parameter $\varepsilon$ multiplying the highest order derivate terms are said to be singularly perturbed.

Singularly perturbed equations occur frequently in mathematical models of various areas of physics, chemistry, biology, engineering science, economics and even sociology. 
These equations appear in  analysis of practical applications, for example in fluid dinamics (aero and hydrodinamics), semiconductor theory, advection-dominated heat and mass transfer, theory of plates, shellsand chemical kinetics, seismology, geophysics, nonlinear mechanics and so on.

A common features of singularly perturbed  equations is that their solutions have tiny boundary or interior layers, in which there is a sudden change 
of the solution's values of these equations. Such sudden changes occur e.g. in physics when a viscous gas flows at high speed and has a contact with a solid surface, then in chemical reaction,  in which besides the reactants, a catalyst is also involved. 


Using classical numerical methods such are finite difference schemes and finite element methods, which do not take into account the appearance of the boundary or inner layer, we get results which are unacceptable from the standpoint of stability, the value of the error or the cost of calculation.

Our goal is to construct a numerical method to overcome the previously listed problems, i.e., to construct an $\varepsilon$--uniformly convergent numerical method for problem $\eqref{uvod1}-\eqref{uvod3}.$

The numerical method is said to be an $\varepsilon$--uniformly convergent in the maximum  discrete norm of the order $r$, if  
\[\left\|y-\overline{y}\right\|_{\infty}\leqslant C N^{-r}, \]
where $y$ is the exact solution of the original continuous problem, $\overline{y}$ is the numerical solution of a given continuous problem, $N$ is the number of mesh points, 
and $C$ is a constant which does not depend of $N$ nor $\varepsilon.$

Many authors have analyzed and made a great contribution to the study of the problem \eqref{uvod1}--\eqref{uvod3} with different assumptions about the function $f$; and as well as more general nonlinear problems. 

There were many constructed $\varepsilon$--uniformly convergent difference schemes of order 2 and higher (Herceg \cite{herceg1990}, Herceg and Surla \cite{herceg1991}, Herceg and Miloradovi{\'c} \cite{herceg2003}, Herceg and Herceg \cite{herceg2003a}, Kopteva and Lin\ss\, \cite{kopteva2001},  Kopteva and Stynes \cite{kopteva2001robust,kopteva2004}, Kopteva, Pickett and Purtill \cite{kopteva2009}, Lin\ss, Roos and Vulanovi{\'c} \cite{linss2000uniform}, Sun and Stynes \cite{stynes1996}, Stynes and Kopteva \cite{stynes2006numerical}, Surla and Uzelac \cite{surla2003}, Vulanovi{\'c} \cite{vulanovic1983, vulanovic1989, vulanovic1991second, vulanovic1993, vulanovic2004}, Kopteva \cite{kopteva2007maximum} etc.).

The numerical method which we are going to construct and analyze in this paper is a synthesis of the two approaches in numerical solving of the problem \eqref{uvod1}--\eqref{uvod3}, 
and in an adequate approximation of the given boundary problem and the use of a layer--adaptive mesh. As mentioned above, the exact solutions of the singular perturbation boundary value problems usually exhibits sharp boundary or interior layers. Our goal is to construct a scheme whose coefficients behave as similar as the exact solution in order to get the  best possible numerical results. 

Because that the coefficients of our scheme should be exponential type functions.

Construction and analysis of these exponentially fitting differences schemes for solving linear singular--perturbation problems  can be seen in Roos \cite{roos1990}, O'Riordan and Stynes \cite{riordan1986uniformly} etc, while the appropriate schemes for nonlinear problems can be seen in Niijima \cite{niijima}, O'Riordan and Stynes \cite{riordan1986uniform}, Stynes \cite{stynes1987adaptive} and others. 
Above mentioned the fitted exponential differential schemes are uniformly convergent. In order to obtain an $\varepsilon$--uniformly convergent method, we  need to use appropriate a layer-adapted mesh.

Shishkin mesh \cite{shishkin1988grid} and their modification \cite{surla1996,vulanovic2004,  linss2012approximation} and others, Bakhvalov mesh \cite{bahvalov1969} and their modification \cite{herceg1990, herceg1998, herceg2000, herceg2003, vulanovic1983, vulanovic1993} and others are the most used layer--adapted meshes. 

The method, appropriate for our purpose, was first presented by Boglaev \cite{boglaev1984approximate}, where the discretisation of the problem \eqref{uvod1}--\eqref{uvod3} on
a modified Bakhvalov mesh was analysed and first order uniform convergence with respect to $\varepsilon$ was demonstrated. 

Using the method of \cite{boglaev1984approximate}, authors constructed new difference schemes in papers \cite{samir2011scheme}  and  \cite{samir2010scheme} for the problem \eqref{uvod1}--\eqref{uvod3} and carried out numerical experiments.

In \cite{samir2015uniformlyconvergent, samir2015uniformly} authors constructed new difference schemes and proved the uniqueness of the numerical solution
and an $\varepsilon$--uniform convergence on a modified Shishkin mesh, and at the end presented numerical experiments, others results are in \cite{ samir2017construction, samir2018uniformly} and \cite{samir2011skoplje, samir2012uniformnly, samir2012class, samir2013collocation, samir2015construction}.

In order to obtain better results, instead of Shishkin mesh, we will use a modification of  Bakhvalov mesh.  We have decided to use the modification of Bakhvalov mesh constructed by  Vulanovi\'c \cite{vulanovic1993}. This mesh has the features that we need in our analysis of the value of the error numerical method.

Shishkin mesh  is much simpler than Bakhvalov mesh, but many difference schemes applied to Bakhvalov mesh show better results. In order to get  better results we used a modification of Bakhvalov mesh.

This paper consist of six parts and it has the following structure. The first part is Introduction.  Next, in Section 2 a class of differential schemes are constructed, and it is proven the theorem of existence and uniqueness of the numerical solution. Mesh construction is in Section 3. In Section 4, it is showed and proven the theorem of an
$\varepsilon$--uniform convergence. In Section 5 are numerical experiments which confirm the theoretical results. The last two sections are Conclusion and Acknowledgments.

We use $\mathbb{R}^{N+1}$ to denote the real $(N+1)$--dimensional linear space of all column vectors 
\[u=(u_0,u_1,\ldots,u_N)^T.\] 
We equip space $\mathbb{R}^{N+1}$ with usual maximum vector norm 
\[\left\| u\right\|_{\infty}=\max_{0\leqslant i\leqslant N}\left|u_i\right|.\] The induced norm of a linear mapping $A=(a_{ij}):\mathbb{R}^{N+1}\rightarrow\mathbb{R}^{N+1}$ is
\[\left\|A\right\|_{\infty}=\max_{0\leqslant i\leqslant N}\sum_{j=0}^N{\left|a_{ij}\right|}.\]

\begin{remark} Throughout this paper we let $C$, sometimes subscripted, denote a generic positive constant that may take different values in different formulas, but it is always independent of $N$ and $\varepsilon$.
\end{remark}

\section{Construction of the scheme}
 \label{sekcija2}
 \setcounter{theorem}{0}

We will use the well--known Green's function for the operator $L_{\varepsilon}y:=\varepsilon^2y''-\gamma y,$ for the construction of the differential scheme, where $\gamma$ is  a constant. The value of $\gamma$ will be determined later in this section.

This method, as we mentioned in Introduction, first was introduced by Boglaev in his paper  \cite{boglaev1984approximate}. 
Detailed construction of differential schemes done by this method, can be found in  \cite{samir2015uniformlyconvergent, samir2015uniformly}. In \cite{samir2015uniformly} was obtained the following  equality 
\begin{equation}
\begin{split}
&\frac{\beta}{\sinh(\beta h_{i-1})}  y_{i-1}-\left( \frac{\beta}{\tanh(\beta h_{i-1})}+\frac{\beta}{\tanh(\beta h_i)} \right)y_i
+\frac{\beta}{\sinh(\beta h_i)}y_{i+1}\\
&\hspace{4cm}=\dfrac{1}{\varepsilon^2}\left[\int\limits_{x_{i-1}}^{x_{i}}{u_{i-1}^{II}(s)\psi(s,y(s))ds}+\int\limits_{x_{i}}^{x_{i+1}}{u_{i}^{I}(s)\psi(s,y(s))ds}\right],\\
&\hspace{3cm}y_0=0,\:\:y_N=0,\:\:i=1,2,\cdots,N-1,
\end{split}
\label{konst13}
\end{equation}
where 

\begin{equation*}
 \psi(x,y(x))=f(x,y(x))-\gamma y(x),
\label{proizvoljna1}
\end{equation*}

\begin{equation}
   0=x_0<x_1<x_2<\cdots<x_N=1,
  \label{proizvoljna}
\end{equation} 
is an arbitrary mesh on $[0,1],$  $h_i=x_{i+1}-x_i,$   
\begin{equation}
 \beta=\dfrac{\sqrt{\gamma}}{\varepsilon},
 \label{beta}
\end{equation} 
functions $u^{I}_{i}$ and $u^{II}_{i}$ are the solutions of the next boundary value problem

\begin{align*}
\begin{array}{c}L_\varepsilon y=0\:\text{on}\left(x_{i},x_{i+1}\right),\\
	 u_i(x_i)=1,\: u_i(x_{i+1})=0,\\ i=0,1,...,N-1,
	 \end{array}
\text{\quad and \quad}
 \begin{array}{c}L_\varepsilon y=0\:\text{on}\left(x_{i},x_{i+1}\right),\\
	 u_i(x_i)=0,\:u_i(x_{i+1})=1,\\ i=0,1,...,N-1.
\end{array}
\label{dodatak2}
\end{align*}
and
\begin{equation*}
\begin{array}{c}u_i^{I}(x)=\dfrac{\sinh\left(\beta\left(x_{i+1}-x\right)\right)}{\sinh\left(\beta h_i \right)},\:u_i^{II}(x)=\dfrac{\sinh\left(\beta\left(x-x_i\right) \right)}{\sinh\left(\beta h_i\right)},\:x\in\left[ x_i,x_{i+1}\right],\\i=0,1,2,...,N-1.	
\end{array}
\label{dodatak6}
\end{equation*}

We cannot, in general, explicitly compute the integrals on the right-hand side of \eqref{konst13}. In order to get a simple enough difference scheme, we approximate the function $\psi$ on
$[x_{i-1},x_i]\cup[x_i,x_{i+1}]$ using 

\begin{equation}
\begin{array}{c}
\overline{\psi}_{i}=\dfrac{\psi(x_{i-1},\overline{y}_{i-1})+q\psi(x_{i},\overline{y}_{i})+\psi(x_{i+1},\overline{y}_{i+1})}{q+2},
\end{array}
\label{psi}
\end{equation}
where  $q\in\mathbb{R}^{+},$ while $\overline{y}_i$ are approximate values of the solution $y$ of the problem \eqref{uvod1}--\eqref{uvod3} at mesh points $x_i$. 

Finally, from \eqref{konst13}, using \eqref{psi}, we get the following difference scheme
\begin{equation}
 \begin{split} 
  \left[(q+1)a_{i}+d_{i}+\triangle d_{i+1}\right]&\left(\overline{y}_{i-1}-\overline{y}_i\right)
     -\left[(q+1)a_{i+1}+d_{i+1}+\triangle d_{i} \right]\left(\overline{y}_i-\overline{y}_{i+1}\right)\\
-&\dfrac{f(x_{i-1},\overline{y}_{i-1})+qf(x_{i},\overline{y}_{i})+f(x_{i+1},\overline{y}_{i+1}) }{\gamma}\left(\triangle d_{i}+\triangle d_{i+1}\right)=0,\\
\overline{y}_0=&0,\:\:\overline{y}_N=0,\:\:i=1,2,...,N-1,
 \end{split}
 \label{konst22}
\end{equation}
where $a_i=\frac{1}{\sinh(\beta h_{i-1})},\:d_i=\frac{1}{\tanh(\beta h_{i-1})} ,\:\triangle d_i= d_i-a_i.$

Let us introduce the discrete problem of the problem \eqref{uvod1}--\eqref{uvod3}, using \eqref{konst22} on the mesh \eqref{proizvoljna} we can write  
\begin{equation}
   F\overline{y}=\left(F_0\overline{y},F_1\overline{y},\ldots,F_N \overline{y}\right)^T=0,
  \label{diskretni1}
\end{equation}
where are

\begin{equation}
  \begin{split} 
   F_0\overline{y}:=&\overline{y}_0=0,\\   
   F_i\overline{y}:=&\frac{\gamma}{\triangle d_i+\triangle d_{i+1}}
          \left\{ \left[(q+1)a_{i}+d_{i}+\triangle d_{i+1}\right]\left(\overline{y}_{i-1}-\overline{y}_i\right) \right.\\
        &\hspace{.25cm} -\left[(q+1)a_{i+1}+d_{i+1}+\triangle d_{i} \right]\left(\overline{y}_i-\overline{y}_{i+1}\right)\\ 
        &\hspace{.5cm} -\dfrac{f(x_{i-1},\overline{y}_{i-1})+qf(x_{i},\overline{y}_{i})+f(x_{i+1},\overline{y}_{i+1}) }{\gamma}
        \left.\left(\triangle d_{i}+\triangle d_{i+1}\right)\right\},\\
        &\hspace{2cm}i=1,2,\ldots,N-1,\\
   F_N\overline{y}:=&\overline{y}_N=0.  
  \end{split}
 \label{diskretni2}
\end{equation}

\begin{theorem} The discrete problem $(\ref{diskretni1})$ for $\gamma\geq f_y,$ has the unique solution $\overline{y},$ where\\  $\overline{y}=(\overline{y}_0,\overline{y}_1,\overline{y}_2,...,\overline{y}_{N-1},\overline{y}_{N})^T\in\mathbb{R}^{N+1}.$ Moreover, for any $v,w\in\mathbb{R}^{N+1}$, the following stabilty inequality holds 
\begin{equation}
    \left\|w-v\right\|_{\infty}\leqslant \frac{1}{m}\left\|Fw-Fv\right\|_{\infty}.
  \label{stabilnost}
\end{equation}
\label{teorema21}
\end{theorem}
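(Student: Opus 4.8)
The plan is to recognize the discrete operator $F$ as a nonlinear perturbation of a linear $M$-matrix and to apply a discrete maximum-principle / monotone-operator argument. First I would rewrite $F_i\overline{y}$ in the standard three-point form
\[
F_i\overline{y} = r_i^{-}\,\overline{y}_{i-1} + r_i^{0}\,\overline{y}_i + r_i^{+}\,\overline{y}_{i+1} - \varphi_i(\overline{y}_{i-1},\overline{y}_i,\overline{y}_{i+1}),
\]
collecting the coefficients of $\overline{y}_{i-1},\overline{y}_i,\overline{y}_{i+1}$ coming from the $a_i,d_i,\triangle d_i$ terms and keeping the nonlinear part (the $f$-terms divided by $\gamma$, weighted by $\triangle d_i+\triangle d_{i+1}$) separate. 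The key elementary facts are that for $t>0$ one has $\sinh t>0$, $\tanh t\in(0,1)$, hence $a_i>0$, $d_i>1>0$, and $\triangle d_i = d_i-a_i = \frac{1-\cosh(\beta h_{i-1})}{\sinh(\beta h_{i-1})}<0$; more precisely $\triangle d_i = -\tanh(\beta h_{i-1}/2)$. One checks that the off-diagonal coefficients $r_i^{\pm}$ are negative, the diagonal $r_i^0$ is positive, and the row sums of the linear part are $\leq 0$ — in fact the linear part is exactly the operator associated with $\varepsilon^2 y'' - \gamma y$, so its row sum equals $\gamma/(\triangle d_i+\triangle d_{i+1})\cdot(\text{something})$ that I would compute to be $-\,$(positive); together with $F_0,F_N$ being the identity rows this gives that the linear part of $F$ is an inverse-monotone $L$-matrix with a uniform bound.

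Next I would handle the nonlinearity via the mean value theorem. For $v,w\in\mathbb{R}^{N+1}$ write $Fw-Fv$ and use $f(x_j,w_j)-f(x_j,v_j) = f_y(x_j,\xi_j)(w_j-v_j)$ with $\xi_j$ between $v_j$ and $w_j$; by \eqref{uvod3} each such factor lies in $[m,\infty)$, and by the hypothesis $\gamma\geq f_y$ (interpreted as: $\gamma$ may be chosen $\geq \sup f_y$ on the relevant range, or $\gamma\ge f_y$ pointwise at the linearization points) the quantity $\gamma - f_y(x_j,\xi_j)\in(-\,\infty,\gamma-m]$, and crucially $\ge 0$. Then $F(w)-F(v) = \mathcal{A}(w-v)$ where $\mathcal{A}=(\alpha_{ij})$ is a tridiagonal matrix whose off-diagonal entries are $r_i^{\pm}$ plus a nonpositive multiple of $\triangle d$ (still negative, since $\triangle d<0$ and we are adding $-(\gamma-f_y)/\gamma\cdot\triangle d>0$... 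I would check the sign bookkeeping carefully here), and whose diagonal entry dominates: I claim $|\alpha_{ii}| - \sum_{j\ne i}|\alpha_{ij}| \ge m$ for each interior $i$, and $=1\ge m$ (WLOG $m\le 1$) for $i=0,N$. This strict diagonal dominance by rows, uniform in $i,N,\varepsilon$, is the heart of the matter: it makes $\mathcal{A}$ an $M$-matrix with $\|\mathcal{A}^{-1}\|_\infty\le 1/m$, which immediately yields $\|w-v\|_\infty = \|\mathcal{A}^{-1}(Fw-Fv)\|_\infty \le \frac1m\|Fw-Fv\|_\infty$, i.e.\ \eqref{stabilnost}.

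Finally, existence and uniqueness: uniqueness is immediate from \eqref{stabilnost} (if $Fw=Fv=0$ then $w=v$). For existence I would invoke a standard argument for continuous inverse-monotone maps — e.g.\ observe that $F:\mathbb{R}^{N+1}\to\mathbb{R}^{N+1}$ is continuous (indeed $C^k$) and, by the uniform lower bound on the diagonal dominance, that $F$ is a homeomorphism onto $\mathbb{R}^{N+1}$ (the Hadamard global inverse function theorem, or the Brouwer-degree/Leray–Schauder argument used in Lorenz \cite{lorenz1982stability} and in the companion papers \cite{samir2015uniformly}); alternatively, write the scheme as a fixed-point problem $\overline{y}=\mathcal{T}\overline{y}$ for a contraction built from the $M$-matrix of the linear part. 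I expect the main obstacle to be purely technical rather than conceptual: carefully verifying the signs of all the coefficients (especially that adding the nonpositive $-(\gamma-f_y)/\gamma\cdot(\triangle d_i+\triangle d_{i+1})$ terms to the off-diagonal entries keeps them $\le 0$) and confirming that the row-sum computation collapses to exactly $\ge m$ (not merely $\ge 0$), since $\ge m$ is what forces the explicit constant $1/m$ in the stability bound. Once the algebra of $a_i,d_i,\triangle d_i$ is organized — using $\triangle d_i=-\tanh(\beta h_{i-1}/2)$ and $a_i = \triangle d_i + d_i$ — the estimates should fall out cleanly.
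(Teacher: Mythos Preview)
Your strategy matches the paper's exactly: linearize $F$ via the mean-value theorem to obtain a tridiagonal Fr\'echet derivative, verify it is an $M$-matrix with uniform strict diagonal dominance, read off $\|(F'u)^{-1}\|_\infty\le 1/m$, and invoke Hadamard's global inverse function theorem for existence.

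There is, however, one concrete sign error that would derail the bookkeeping you flag as the ``main obstacle'': you claim $\triangle d_i<0$, but in fact
\[
\triangle d_i=d_i-a_i=\coth(\beta h_{i-1})-\frac{1}{\sinh(\beta h_{i-1})}=\frac{\cosh(\beta h_{i-1})-1}{\sinh(\beta h_{i-1})}=\tanh\!\bigl(\tfrac{\beta h_{i-1}}{2}\bigr)>0.
\]
Consequently the Jacobian has $h_{i,i}<0$ and $h_{i,i\pm1}>0$ (the opposite of what you predict for $r_i^0,r_i^\pm$), and the hypothesis $\gamma\ge f_y$ is precisely what keeps the off-diagonal entries \emph{positive}, since each contains the factor $1-\gamma^{-1}f_y\ge 0$. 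With the signs corrected, the row-sum computation is cleaner than you anticipate: the $a_i,d_i,\triangle d_i$ contributions cancel exactly and one obtains
\[
|h_{i,i}|-|h_{i,i-1}|-|h_{i,i+1}|=f_y(x_{i-1},\cdot)+q\,f_y(x_i,\cdot)+f_y(x_{i+1},\cdot)\ge (q+2)m,
\]
so the stability constant is in fact $1/\bigl((q+2)m\bigr)$, which the paper then weakens to $1/m$.
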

\begin{proof} We use a technique from \cite{herceg2003, vulanovic1993}, the proof of existence of the solution of $F\overline{y}=0$ is based on the proof of the following relation: $\left\| \left(F'\overline{y}\right)^{-1}\right\|_{\infty}\leq C,$ where $F'\overline{y}$ is a Fr\'echet derivative of $F$.

The Fr\'echet derivative $H:=F'\overline{y}$ is a tridiagonal matrix. Let $H=[h_{ij}].$  The non-zero elements of this tridiagonal matrix are
\begin{equation}
\begin{split}
h_{0,0}=&h_{N,N}=1,\\
h_{i,i}=&\tfrac{\gamma}{\triangle d_i+\triangle d_{i+1}}\left[ -q(a_i+a_{i+1})-2(d_{i}+ d_{i+1})
              -\tfrac{q}{\gamma}\cdot \tfrac{\partial f(x_{i},\overline{y}_{i})}{\partial \overline{y}_{i}}(\triangle d_i+\triangle d_{i+1}) \right]<0,\\
h_{i,i-1}=&\tfrac{\gamma}{\triangle d_i+\triangle d_{i+1}}\left[  \left(\triangle d_i+\triangle d_{i+1}\right)\left(1-\tfrac{1}{\gamma}\cdot\tfrac{\partial f(x_{i-1},\overline{y}_{i-1})}{\partial \overline{y}_{i-1}}\right)+(q+2) a_i \right]>0,\\
h_{i,i+1}=&\tfrac{\gamma}{\triangle d_{i}+\triangle d_{i+1}}\left[  \left(\triangle d_{i+1}+\triangle d_{i}\right)\left(1-\tfrac{1}{\gamma}\cdot\tfrac{\partial f(x_{i+1},\overline{y}_{i+1})}{\partial \overline{y}_{i+1}}\right)+(q+2) a_{i+1} \right]>0,\\
&\hspace{2cm}i=1,2,\ldots,N-1.
\end{split}
\label{konst24}
\end{equation}
Hence $H$ is an $L$--matrix.  Let us show that $H$ is an $M$--matrix. Now, we have 
\begin{multline}
|h_{i,i}|-|h_{i,i-1}|-|h_{i-1,i}|\\=\tfrac{\gamma}{\triangle d_i+\triangle d_{i+1}}
   \left[ \frac{(\triangle d_i+\triangle d_{i+1})\left(\tfrac{\partial f(x_{i-1},\overline{y}_{i-1})}{\partial \overline{y}_{i-1}} 
       +q\tfrac{\partial f(x_{i},\overline{y}_{i})}{\partial \overline{y}_{i}}
           +\tfrac{\partial f(x_{i+1},\overline{y}_{i+1})}{\partial \overline{y}_{i}}\right)}{\gamma}  \right]
   \geqslant(q+2)m.  
\label{konst25}
\end{multline}
Based on ($\ref{konst25}$), we have proved that $H$ is an $M$--matrix.  Since $H$ is an $M$--matrix, 
now we obtain
\begin{equation}
\left\|H^{-1}\right\|_{\infty}\leq \dfrac{1}{(q+2)m}.
\label{konst27}
\end{equation}
Finally, by the Hadamard Theorem (5.3.10 from \cite{ortega2000}), the first statement of our theorem follows. 

The second part of the proof is based on the part of the proof of \cite{herceg1990}. We have that 
\begin{equation}
    Fw-Fv=(F'u)(w-v),\:\text{ for some } u=(u_0,u_1,\ldots,u_N)^T\in\mathbb{R}^{N+1}. 
 \label{konst28}
\end{equation}
and
\begin{equation}
  w-v=\left(F'u\right)^{-1}\left( Fw-Fv\right).
 \label{konst29}
\end{equation}
Now, based on \eqref{konst27}, we have that 
\begin{multline}
  \left\|w-v\right\|_{\infty}=\left\|\left(F'u\right)^{-1}\left( Fw-Fv\right)\right\|_{\infty}\\
    \leqslant \frac{1}{(q+2)m}\left\|Fw-Fv\right\|_{\infty}\leqslant\frac{1}{m}\left\|Fw-Fv\right\|_{\infty} . 
\label{konst30}
\end{multline}
\end{proof}

\section{Mesh construction}
 \label{sekcija3}
 \setcounter{theorem}{0}

The exact solution $y$ of the problem \eqref{uvod1}--\eqref{uvod2} has boundary layers of exponential type near the points $x=0$ and $x=1.$  In order to achieve an $\varepsilon$--uniformly convergence of the numerical method, it is necessary to use a layer-adapted mesh. In the construction of this mesh  the occurrence of boundary or inner layers needs to be taken in account. We will use the modified Bakhvalov mesh from  \cite{vulanovic1993}, which has a sufficiently smooth generating function, that is going to  provide the necessary characteristics of the mesh that we need for further analysis.

The mesh $\triangle:x_0<x_1<...<x_N$ is generated by $x_i=\varphi(t_i),\,t_i=i/N=ih,\:h=1/N,\:i=0,1,\ldots,N;\,N=2m,\;m\in\mathbb{N}\backslash\left\{1\right\},$ with the mesh generating function 
\begin{equation}
  \begin{split} 
    \varphi(t)=
     \left\{  \begin{array}{l} 
         \kappa(t):=\tfrac{a \varepsilon t}{p-t},\:t\in\left[0,\alpha\right],\\
                \pi(t)=:\omega(t-\alpha)^3+\tfrac{\kappa''(\alpha)(t-\alpha)^2}{2}+\kappa'(\alpha)(t-\alpha)+\kappa(\alpha),
                           \:t\in\left[\alpha,1/2\right],\\
              1-\varphi(1-t),\:t\in\left[1/2,1\right],            
       \end{array}\right.
  \end{split}
 \label{bakh11}
\end{equation}
here $p$ is an arbitrary parameter from $\left(\left( \varepsilon^{\star}\right)^{1/3},1/2 \right),\,\varepsilon\in(0,\varepsilon^{\star}]$ and $\alpha=p-\varepsilon^{1/3}>0,$ where we assume 
that $\varepsilon^{\star}<\frac{1}{8}.$ The coefficient $\omega$ is determined from $\pi\left(\frac{1}{2} \right)=\frac{1}{2},$ we get
\[ \omega=\left(\tfrac{1}{2}-\alpha \right)^{-3}
 \left\{\tfrac{1}{2}-a\left[p\left( \tfrac{1}{2}-\alpha\right)^2+p\left(\tfrac{1}{2}-\alpha \right)\varepsilon^{1/3}+\alpha\varepsilon^{2/3} \right] \right\},\]
and $a$ is chosen such that $\omega\geqslant0,$ (such  $a,$ independent of $\varepsilon,$ obviously exist).

By this choice of $\alpha$  and $\pi$ we get

\begin{subequations}
  \begin{equation}
    \varphi\in C^{2}\left( \left[0,1\right]\setminus\left\{\tfrac{1}{2}\right\}\right),\vspace{.25cm}
    \label{bakh12}
  \end{equation}
 \begin{equation}
   \left| \varphi'(t)\right|\leqslant C,\:t\in\left[0,1\right],
   \label{bakh13}
 \end{equation}
 and
 \begin{equation}
   \left|\varphi''(t)\right|\leqslant C,\: t\in\left[ 0,1\right]\setminus\left\{\tfrac{1}{2}\right\}.
   \label{bakh14}
 \end{equation}
\end{subequations}

Values of the  mesh sizes $h_i,$ and values of differences  $h_{i+1}-h_i,$ will be given in the next lemma.

\begin{lemma} The mesh sizes $h_{i}=x_{i+1}-x_{i},$ defined by the generating function \eqref{bakh11}, satisfy
    \begin{subequations}
     \begin{equation}
      h_i\leqslant CN^{-1},\:i=0,1,\ldots,N-1,
     \label{korak1}     
     \end{equation}
     and
     \begin{equation}
     \left| h_{i}-h_{i-1}\right|\leqslant CN^{-2},\:i=1,2,\ldots,N-1.
     \label{korak2}
     \end{equation}
    \end{subequations}
\end{lemma}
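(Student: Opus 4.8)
\noindent The plan is to transfer both estimates to the mesh generating function $\varphi$ and to invoke only its regularity properties \eqref{bakh13}--\eqref{bakh14}; the structural point that makes this work is that $N=2m$ is even, so that $t_m=1/2$ is a mesh node and the only singularity of $\varphi''$ sits exactly on a node, while $\varphi$ is genuinely $C^{2}$ on each of the halves $[0,1/2]$ and $[1/2,1]$ separately.

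First I would prove \eqref{korak1}. One checks readily that the generating function is increasing, so $h_i>0$, and
\[
h_i=\varphi(t_{i+1})-\varphi(t_i)=\int_{t_i}^{t_{i+1}}\varphi'(t)\,dt\leqslant C\,(t_{i+1}-t_i)=\frac{C}{N}
\]
by \eqref{bakh13}, which is exactly \eqref{korak1}.

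Next, for \eqref{korak2} I would rewrite the quantity as a centred second difference of $\varphi$,
\[
h_i-h_{i-1}=\varphi(t_{i+1})-2\varphi(t_i)+\varphi(t_{i-1}).
\]
For every index $i\neq m$ the interval $[t_{i-1},t_{i+1}]$ is contained in one of the halves $[0,1/2]$ or $[1/2,1]$ (touching the midpoint, if at all, only at an endpoint), and there $\varphi\in C^{2}$; Taylor's theorem with second-order remainder then gives $h_i-h_{i-1}=\tfrac{h^{2}}{2}\bigl(\varphi''(\xi_i^{+})+\varphi''(\xi_i^{-})\bigr)$ for suitable $\xi_i^{\pm}\in(t_{i-1},t_{i+1})$, whence $|h_i-h_{i-1}|\leqslant C h^{2}=C N^{-2}$ by \eqref{bakh14}. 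For the single remaining index $i=m$ I would use the central symmetry built into \eqref{bakh11}: from $\varphi(t)=1-\varphi(1-t)$ and $t_m=1/2$ one obtains $x_m=\tfrac12$ and $x_{m+1}=1-x_{m-1}$, hence $h_m=x_{m+1}-x_m=\tfrac12-x_{m-1}=x_m-x_{m-1}=h_{m-1}$, i.e.\ $|h_m-h_{m-1}|=0$. Combining the two cases gives \eqref{korak2}.

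The delicate point, and the one I expect to require the most care, is the midpoint $t=1/2$, where $\varphi''$ is discontinuous (the reflection $t\mapsto 1-t$ reverses its sign), so that a Taylor expansion straddling $1/2$ is unavailable and the index $i=m$ must be disposed of separately by symmetry; this is precisely why the mesh is built with an even number of subintervals. Everything else is routine and reduces to \eqref{bakh13}--\eqref{bakh14}, whose validity in turn rests on the choice of $\alpha$ and of the cubic piece $\pi$, which was arranged so that $\varphi\in C^{2}$ across the junction $t=\alpha$ (cf.\ \eqref{bakh12}).
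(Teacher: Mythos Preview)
Your argument is correct and follows the paper's route: integrate $\varphi'$ for \eqref{korak1}, then for \eqref{korak2} use the second-order remainder together with \eqref{bakh14} on each half and dispose of $i=N/2$ by the reflection symmetry. The only difference is that the paper additionally singles out the indices $i=N/2\pm 1$ and bounds $|h_{N/2-1}-h_{N/2-2}|=|h_{N/2+1}-h_{N/2}|$ by an explicit computation with the cubic $\pi$, whereas you (correctly) observe that for these indices $[t_{i-1},t_{i+1}]$ still lies entirely in one half, so they are already covered by your general Taylor case; your two-case split is thus a mild economy over the paper's three.
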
 
\begin{proof} Due to $\eqref{bakh13},$ we have  
 \begin{equation} 
   h_i=\int_{i/N}^{(i+1)/N}{\varphi'(t)\dif t}\leqslant C\int_{i/N}^{(i+1)/N}{\dif t}\leqslant C N^{-1}.
   \label{korak3}
 \end{equation}

Let us divide the proof of \eqref{korak2}, because of \eqref{bakh12},  into three parts. 
 
Firstly, when  $i\in\left\{1,\ldots,N-1 \right\}\backslash\left\{N/2-1,N/2,N/2+1\right\},$  based on \eqref{bakh14}, we have   
 \begin{equation}
  \left|h_{i}-h_{i-1} \right|=\left| \int_{i/N}^{(i+1)/N}\int_{t-1/N}^{t} {\varphi''(s)\dif s\dif t}\right|
                          \leqslant C \left| \int_{i/N}^{(i+1)/N}\int_{t-1/N}^{t} {\dif s\dif t}\right|\leqslant C N^{-2}.
  \label{korak4}                        
 \end{equation}

Secondly, for $i=N/2,$  we get  
 
\begin{equation}
  \begin{split} 
  \left|h_{N/2}-h_{N/2-1}\right|=&1-\varphi(1-(N/2+1)/N)-\frac{1}{2}-\left(\frac{1}{2}-\varphi((N/2-1)/N)\right)\\
                                =&\varphi((N/2-1)/N)-\varphi(1-(N/2+1)/N)\\
                                =&0,  
  \end{split}
  \label{korak5}  
\end{equation} 
and finally, for $i=N/2-1$ or $i=N/2+1,$ we get
\begin{equation}
  \begin{split} 
  |h_{N/2-1}-h_{N/2-2}|=|h_{N/2+1}-h_{N/2}|\leqslant\frac{6\omega}{N^3}+\frac{\mu''(\alpha)+(3-6\alpha)\omega}{N^2}\leqslant\frac{C}{N^2}.
  \end{split} 
\label{korak6}
\end{equation}

Now, using \eqref{korak3}, \eqref{korak4}, \eqref{korak5} and  \eqref{korak6} the inequalities   \eqref{korak1} and \eqref{korak2} are proven.
\end{proof}

\section{Uniform convergence}
 \label{sekcija4}
 \setcounter{theorem}{0}

In this section we prove the theorem on $\varepsilon$--uniform convergence of the discrete problem \eqref{diskretni1}. The proof of the theorem is based on relation $\left\|y-\overline{y}\right\|_{\infty}\leqslant C\left\|Fy-F\overline{y}\right\|_{\infty}.$ 

Stability of the differential sheme is proven in Theorem \ref{teorema21}, and as $F\overline{y}=0,$ it is enough to estimate the value of the expression $\left\|Fy\right\|_{\infty}.$

 The proof uses the decomposition of the solution $y$ to the problem (\ref{uvod1})--(\ref{uvod2}) to the layer $s$ and a regular component $r$, given in the following assertion. 

\begin{theorem} {\rm\cite{vulanovic1983}} The solution $y$ to problem {\rm\eqref{uvod1}--\eqref{uvod2}} can be represented in the following way:
\begin{equation*}
   y=r+s,
\end{equation*}
where for $j=0,1,\ldots,k+2$ and $x\in[0,1]$ we have
\begin{equation}
\left|r^{(j)}(x)\right|\leqslant C,
\label{regularna}
\end{equation}
and
\begin{equation}
\left|s^{(j)}(x)\right|\leqslant C \varepsilon^{-j}\left(e^{-\frac{x}{\varepsilon}\sqrt{m}}+e^{-\frac{1-x}{\varepsilon}\sqrt{m}}\right).
\label{slojna}
\end{equation}
\label{teorema2}
\end{theorem}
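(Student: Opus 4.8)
The plan is to follow the classical outer-expansion-plus-barrier-function argument. First I would construct the smooth component $r$ as a truncated asymptotic expansion in powers of $\varepsilon^{2}$. By the strict monotonicity hypothesis \eqref{uvod3} and the implicit function theorem, the reduced equation $f(x,r_{0}(x))=0$ has a unique solution $r_{0}$, which is as smooth as $f$ allows and satisfies $|r_{0}^{(j)}|\leqslant C$. One then defines $r_{1},r_{2},\dots,r_{q}$ recursively by equating like powers of $\varepsilon$ in $\varepsilon^{2}r''=f(x,r)$ (so $r_{1}=r_{0}''/f_{y}(x,r_{0})$, and so on); each $r_{\nu}$ is a smooth function whose derivatives are bounded independently of $\varepsilon$. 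Setting $r=\sum_{\nu=0}^{q}\varepsilon^{2\nu}r_{\nu}$ with $q$ chosen large enough, one obtains $|r^{(j)}|\leqslant C$ for $0\leqslant j\leqslant k+2$, i.e.\ \eqref{regularna}, and, by the very construction, the residual $\rho:=\varepsilon^{2}r''-f(x,r)$ together with its derivatives is $O(\varepsilon^{2q+2})$ uniformly on $[0,1]$.

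Next I would set $s:=y-r$ and linearize: subtracting the equations and applying the mean value theorem gives that $s$ solves the linear two-point boundary value problem $\varepsilon^{2}s''-b(x)s=-\rho(x)$ on $(0,1)$, with $s(0)=-r(0)$, $s(1)=-r(1)$, where $b(x)=\int_{0}^{1}f_{y}\bigl(x,r(x)+ts(x)\bigr)\,\dif t\geqslant m>0$ by \eqref{uvod3}, and where the boundary data are $O(1)$. Since $b\geqslant m>0$, the operator $\mathcal{L}u:=\varepsilon^{2}u''-b(x)u$ satisfies the maximum principle. Taking the barrier $B(x)=C_{0}\bigl(e^{-x\sqrt{m}/\varepsilon}+e^{-(1-x)\sqrt{m}/\varepsilon}\bigr)+C_{0}\varepsilon^{2q+2}$, one checks $\mathcal{L}B\leqslant-|\rho|$ (the exponential terms give $\mathcal{L}\bigl(e^{-x\sqrt{m}/\varepsilon}\bigr)=(m-b)e^{-x\sqrt{m}/\varepsilon}\leqslant 0$, and the constant term absorbs $\rho$) and $B\geqslant|s|$ at $x\in\{0,1\}$ for $C_{0}$ large, whence the comparison principle yields $|s(x)|\leqslant B(x)$. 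Since $2q+2\geqslant k+2$, the term $C_{0}\varepsilon^{2q+2}$ is a smooth function with derivatives bounded (indeed $O(\varepsilon^{2q+2})$) up to order $k+2$, so it may be transferred from $s$ into $r$; after this the redefined $r$ still satisfies \eqref{regularna}, while the redefined $s$ satisfies $|s(x)|\leqslant C\bigl(e^{-x\sqrt{m}/\varepsilon}+e^{-(1-x)\sqrt{m}/\varepsilon}\bigr)$, which is \eqref{slojna} for $j=0$, and now $\varepsilon^{2}s''=b(x)s$ exactly.

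For the derivative estimates I would argue by induction on $j$. The relation $\varepsilon^{2}s''=b(x)s$ immediately gives the case $j=2$; for $j=1$ I would combine the $j=0$ and $j=2$ bounds through a mean value argument over an interval of length $\varepsilon$ (equivalently, via the Green's-function representation of $s$ for $\mathcal{L}$). For $j\geqslant 3$, differentiating $j-2$ times gives $\varepsilon^{2}s^{(j)}=\sum_{l=0}^{j-2}\binom{j-2}{l}b^{(j-2-l)}s^{(l)}$, and the inductive hypothesis on $s^{(l)}$ for $l<j$, together with $|b^{(l)}|\leqslant C$, yields $|s^{(j)}(x)|\leqslant C\varepsilon^{-j}\bigl(e^{-x\sqrt{m}/\varepsilon}+e^{-(1-x)\sqrt{m}/\varepsilon}\bigr)$, i.e.\ \eqref{slojna}.

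I expect the main obstacle to be the regularity and truncation bookkeeping: one must pick the truncation order $q$ (and invoke enough smoothness of $f$) so that all $r_{\nu}$ are smooth enough for \eqref{regularna} up to order $k+2$, so that $\rho$ is small enough to be absorbed without harming \eqref{regularna}, and so that the linearized coefficient $b$ is differentiable enough to run the induction all the way to $j=k+2$. Verifying the comparison principle for $\mathcal{L}$ when $b$ is only known to be bounded is the other delicate point, and it is precisely here that the a priori boundedness of the solution $y$ of \eqref{uvod1}--\eqref{uvod2}, and of $r$, is used to control $b$ and its derivatives.
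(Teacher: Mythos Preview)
The paper does not actually prove this theorem: its entire proof is the single line ``See in Vulanovi\'c \cite{vulanovic1983}.'' Your outline is precisely the classical argument from that source (truncated asymptotic expansion for $r$, comparison principle for $s$, then induction on the order of the derivative), so at the level of strategy there is nothing to compare.

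One point in your sketch does need repair. In the induction for $j\geqslant 3$ you assert $|b^{(l)}|\leqslant C$, but $b(x)=\int_0^1 f_y\bigl(x,r(x)+ts(x)\bigr)\,\dif t$ depends on $s$, and differentiating it produces factors $s',s'',\dots$, which by the inductive hypothesis already grow like $\varepsilon^{-1},\varepsilon^{-2},\dots$ near the endpoints. The honest bound is $|b^{(l)}(x)|\leqslant C\bigl(1+\varepsilon^{-l}E(x)\bigr)$ with $E(x)=e^{-x\sqrt m/\varepsilon}+e^{-(1-x)\sqrt m/\varepsilon}$. The induction still closes, because in $\varepsilon^{2}s^{(j)}=\sum_{l}\binom{j-2}{l}b^{(j-2-l)}s^{(l)}$ each summand is at most $C\bigl(1+\varepsilon^{-(j-2-l)}E\bigr)\varepsilon^{-l}E\leqslant C\varepsilon^{-(j-2)}E$ (using $E\leqslant 2$, hence $E^{2}\leqslant 2E$), so one still obtains $|s^{(j)}|\leqslant C\varepsilon^{-j}E$; but you should state and use this sharper bound on $b^{(l)}$ rather than the false uniform one. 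A related loose end is the step where you ``transfer'' the constant $C_0\varepsilon^{2q+2}$ from the barrier into $r$: that quantity is part of an \emph{upper bound} for $|s|$, not a summand of $s$, so it cannot literally be moved. The clean way to eliminate the residual $\rho$ is either to define $r$ as the exact solution of $\varepsilon^{2}r''=f(x,r)$ on an interval extending $[0,1]$ (so that the layers of $r$ lie outside $[0,1]$ and $\rho\equiv 0$ there), or to split off from $s$ the solution $w$ of $\varepsilon^{2}w''-bw=-\rho$, $w(0)=w(1)=0$, verify $|w^{(j)}|\leqslant C\varepsilon^{2q+2-j}\leqslant C$ for $q$ large, and absorb $w$ into $r$.
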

\begin{proof}
See in Vulanovi\' c \cite{vulanovic1983}.
\end{proof}

Note that $e^{-\frac{x}{\varepsilon}\sqrt{m}}\geqslant e^{-\frac{1-x}{\varepsilon}\sqrt{m}},\:\forall x\in[0,1/2]$ and $e^{-\frac{x}{\varepsilon}\sqrt{m}}\leqslant e^{-\frac{1-x}{\varepsilon}\sqrt{m}},\:\forall x\in[1/2,1].$  These inequalities and the estimate \eqref{slojna} imply that the analysis of the error value can be done for $\left\|Fy\right\|_{\infty}$ on the part of the mesh which corresponds to $[0,1/2]$ omitting the function $e^{-\frac{1-x}{\varepsilon}\sqrt{m}},$ keeping in mind that on this part of the mesh we have that $h_{i-1}\leqslant h_i.$ An analogous analysis holds for the part of the mesh which corresponds to $x\in[1/2,1],$ but with the omission of the function $e^{-\frac{x}{\varepsilon}\sqrt{m}}$ and using the inequality $h_{i-1}\geqslant h_i.$

In order to get as simpler analysis as possible, let us write $F_iy,\:i=1,2,\ldots,N-1,$ in the following form

\begin{equation}
  \begin{split} 
    F_iy
                      =&\gamma\left(y_{i-1}-2y_i+y_{i+1} \right)   \\
                                 &\hspace{.5cm}+\gamma \tilde{q} \frac{a_i(y_{i-1}-y_i)-a_{i+1}(y_i-y_{i+1})
                                     -\tfrac{f(x_i,y_i)}{\gamma}\left(\Delta d_i+\Delta d_{i+1}\right) }{\Delta d_i+\Delta d_{i+1}} \\
                                 &\hspace{1cm}+2\gamma\frac{a_i(y_{i-1}-y_i)-a_{i+1}(y_i-y_{i+1})}
                                     {\Delta d_i+\Delta d_{i+1}}  , \\
                                 &\hspace{1.5cm}-\frac{f(x_{i-1},y_{i-1})+f(x_{i+1},y_{i+1})\left( \Delta d_i+\Delta d_{i+1}\right)}
                                     {\Delta d_i+\Delta d_{i+1}}  , \\          
                                &\hspace{3.5cm}     i=1,...,N-1.
  \end{split}
\label{bah1}
\end{equation}
Using marks $P_i,\,Q_i,\,R_i,$ we have    
\begin{equation}
  \begin{split} 
     F_iy=&P_i+Q_i+R_i,\:i=1,2,\ldots,N-1,
  \end{split}
\label{bah1a}
\end{equation}
where 
\begin{equation}
  \begin{split} 
   P_i=\gamma\left(y_{i-1}-2y_i+y_{i+1} \right),
  \end{split}
\label{bah1b}
\end{equation}
and using Taylor expansions for $y_{i-1}$ and $y_{i+1},$ we get

\begin{equation}
  \begin{split} 
  Q_i
   &= \gamma q \left(y'_i\frac{h_i\sinh(\beta h_{i-1})-h_{i-1}\sinh(\beta h_i) }
                   {\sinh(\beta h_i)(\cosh(\beta h_{i-1})-1)+\sinh(\beta h_{i-1})(\cosh(\beta h_i)-1)} \right.    \\ 
   &\hspace{.5cm}+\frac{y''_i}{2}\cdot\frac{ h^2_{i-1}\sinh(\beta h_i)+h^2_{i}\sinh(\beta h_{i-1})}
              {\sinh(\beta h_i)(\cosh(\beta h_{i-1})-1)+\sinh(\beta h_{i-1})(\cosh(\beta h_i)-1)}  \\ 
   &\hspace{1.cm}-\frac{\varepsilon^2y''_i}{\gamma}\cdot\frac{
                \sinh(\beta h_i)(\cosh(\beta h_{i-1})-1)+\sinh(\beta h_{i-1})(\cosh(\beta h_i)-1) }
              {\sinh(\beta h_i)(\cosh(\beta h_{i-1})-1)+\sinh(\beta h_{i-1})(\cosh(\beta h_i)-1)}  \\               
   &\hspace{1.5cm}+\frac{y'''_i}{6}\cdot\frac{h^3_{i}\sinh(\beta h_{i-1})-h^3_{i-1}\sinh(\beta h_i)}
        {\sinh(\beta h_i)(\cosh(\beta h_{i-1})-1)+\sinh(\beta h_{i-1})(\cosh(\beta h_i)-1)}       \\    
   &\hspace{2cm}+\left.\frac{y^{(iv)}(\zeta^{-}_{i-1})h^4_{i-1}\sinh(\beta h_i)+y^{(iv)}(\zeta^{+}_i)h^4_i\sinh(\beta h_{i-1})}
               {24(\sinh(\beta h_i)(\cosh(\beta h_{i-1})-1)+\sinh(\beta h_{i-1})(\cosh(\beta h_i)-1))}\right),  
  \end{split}
\label{bah2}
\end{equation}

and 

\begin{equation}
   \begin{split} 
    R_i
  &=2\gamma y'_i\frac{h_i\sinh(\beta h_{i-1})-h_{i-1}\sinh(\beta h_i)} 
                       {\sinh(\beta h_i)(\cosh(\beta h_{i-1})-1)+\sinh(\beta h_{i-1})(\cosh(\beta h_i)-1)}  \\
  &\hspace{.5cm}+2\gamma y''_i\left( \frac{1}{2}\cdot
               \frac{h^2_{i-1}\sinh(\beta h_i)+h^2_i\sinh(\beta h_{i-1})   }
                       {\sinh(\beta h_i)(\cosh(\beta h_{i-1})-1)+\sinh(\beta h_{i-1})(\cosh(\beta h_i)-1)} \right. \\
  &\hspace{2cm}-\frac{1}{\beta^2}\cdot\left.
               \frac{ \sinh(\beta h_i)(\cosh(\beta h_{i-1})-1)+\sinh(\beta h_{i-1})(\cosh(\beta h_i)-1) }
                       {\sinh(\beta h_i)(\cosh(\beta h_{i-1})-1)+\sinh(\beta h_{i-1})(\cosh(\beta h_i)-1)}\right)  \\                                                                                                                                                         
  &\hspace{.75cm}+\frac{\gamma y'''_i}{3}\cdot \frac{h^3_i\sinh(\beta h_{i-1})-h^3_{i-1}\sinh(\beta h_i)}
                        {\sinh(\beta h_i)(\cosh(\beta h_{i-1})-1)+\sinh(\beta h_{i-1})(\cosh(\beta h_i)-1)} \\
  &\hspace{1cm}                         +y'''_i\varepsilon^2(h_{i-1}-h_{i}) \\
  &\hspace{1.25cm}+ 2\gamma \frac{\tfrac{y^{(iv)}(\zeta^{-}_{i-1})h^4_{i-1}}{24}\sinh(\beta h_i)
                                        +\tfrac{y^{(iv)}(\zeta^{+}_{i})h^4_{i}}{24}\sinh(\beta h_{i-1})} 
                      {\sinh(\beta h_i)(\cosh(\beta h_{i-1})-1)+\sinh(\beta h_{i-1})(\cosh(\beta h_i)-1)} \\
  &\hspace{1.5cm}               -\tfrac{\varepsilon^2}{2}\left[y^{(iv)}(\mu^{-}_{i-1})h^2_{i-1}+y^{(iv)}(\mu^{+}_{i})h^2_{i}\right]                                                 
   \end{split}
\label{bah7}
\end{equation}
$i=1,2,\ldots,N-1$ and $\zeta^{-}_{i-1},\mu^{-}_{i-1}\in\left(x_{i-1},x_i\right),\,\zeta^{+}_{i},\mu^{+}_i\in\left(x_i,x_{i+1}\right).$

\begin{lemma}Let the mesh size $h_{i}=x_{i+1}-x_{i},$ be defined by the generating function \eqref{bakh11}. It holds the estimate 
  \begin{equation}
     \begin{array}{c}
      \left|y_{i-1}-y_i-(y_i-y_{i+1}) \right|\leqslant C \left(\left|y'_i\right|(h_i-h_{i-1})
                   +\tfrac{\left|y''(\delta^{-}_{i-1})\right|}{2}h^2_{i-1}+ \tfrac{\left|y''(\delta^{+}_{i})\right|}{2}h^2_{i}\right),\\
                   i=1,\ldots,N/2,
     \end{array}
    \label{lema2}
  \end{equation}
 \label{lema41} 
where are $\delta^{-}_{i-1}\in\left(x_{i-1},x_i\right),\,\delta^{+}_{i}\in\left(x_{i},x_{i+1}\right).$
\end{lemma}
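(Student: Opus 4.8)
The plan is to derive \eqref{lema2} directly from a second-order Taylor expansion of $y$ about the node $x_i$, keeping the Lagrange form of the remainder so that the intermediate points $\delta^{-}_{i-1}\in(x_{i-1},x_i)$ and $\delta^{+}_{i}\in(x_i,x_{i+1})$ appear explicitly. Since $f\in C^{k}$ with $k\geqslant 2$, the solution $y$ is at least twice continuously differentiable (indeed $y\in C^{k+2}[0,1]$, cf. Theorem~\ref{teorema2}), so Taylor's theorem with Lagrange remainder applies on each subinterval.

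First I would write
\[
y_{i-1}=y(x_i-h_{i-1})=y_i-h_{i-1}y'_i+\tfrac{h_{i-1}^{2}}{2}\,y''(\delta^{-}_{i-1}),\qquad
y_{i+1}=y(x_i+h_i)=y_i+h_i y'_i+\tfrac{h_i^{2}}{2}\,y''(\delta^{+}_{i}).
\]
Adding these two identities, the first-order terms collapse to $(h_i-h_{i-1})y'_i$, which gives
\[
y_{i-1}-y_i-(y_i-y_{i+1})=(h_i-h_{i-1})\,y'_i+\tfrac{h_{i-1}^{2}}{2}\,y''(\delta^{-}_{i-1})+\tfrac{h_i^{2}}{2}\,y''(\delta^{+}_{i}),
\]
and the triangle inequality yields
\[
\bigl|y_{i-1}-y_i-(y_i-y_{i+1})\bigr|\leqslant |h_i-h_{i-1}|\,|y'_i|+\tfrac{|y''(\delta^{-}_{i-1})|}{2}h_{i-1}^{2}+\tfrac{|y''(\delta^{+}_{i})|}{2}h_i^{2}.
\]

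It then only remains to drop the absolute value around $h_i-h_{i-1}$, i.e. to verify that $h_{i-1}\leqslant h_i$ for $1\leqslant i\leqslant N/2$ — precisely the monotonicity of the mesh sizes on the left half of $[0,1]$ already invoked in the preamble of this section. This holds because the generating function $\varphi$ is convex on $[0,1/2]$: the component $\kappa$ has increasing derivative $\kappa'(t)=a\varepsilon p/(p-t)^{2}$, and the cubic continuation satisfies $\pi''(t)=6\omega(t-\alpha)+\kappa''(\alpha)\geqslant 0$ when $\omega\geqslant 0$, the two pieces being matched in a $C^{2}$ fashion at $t=\alpha$; hence $h_i=\varphi(t_{i+1})-\varphi(t_i)$ is nondecreasing in $i$ on this range, with equality $h_{N/2-1}=h_{N/2}$ at the endpoint by the symmetry identity \eqref{korak5}. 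With $|h_i-h_{i-1}|=h_i-h_{i-1}$ the last display becomes exactly \eqref{lema2} with $C=1$ (hence with any $C\geqslant 1$). I do not anticipate a genuine obstacle: the argument is Taylor's formula together with this sign observation. The only point that needs a word of care is that one remains on the convex branch all the way up to and including $i=N/2$ — which is precisely why the lemma is stated for $i\leqslant N/2$; the mirror estimate for $i\geqslant N/2$, with $h_{i-1}\geqslant h_i$, would be proved identically on the reflected part of the mesh.
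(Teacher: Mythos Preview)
Your proof is correct and follows exactly the approach the paper intends: the paper's own proof reads simply ``The proof is trivial, using Taylor expansions for $y_{i-1}$ and $y_{i+1}$ we obtain \eqref{lema2},'' and your argument spells this out, together with the justification that $h_{i-1}\leqslant h_i$ on the left half of the mesh (which the paper merely asserts in the preamble to Section~\ref{sekcija4}). There is nothing to add.
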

\begin{proof} The proof is trivial, using Taylor expansions for $y_{i-1}$ and  $y_{i+1}$ we obtain \eqref{lema2}.
\end{proof}

\begin{lemma} Let the mesh size $h_{i}=x_{i+1}-x_{i},$ be defined by the generating function \eqref{bakh11} and $\beta$ defined by \eqref{beta}. It holds estimate
   \begin{equation}
        \left|y'_i\right| \tfrac{\left|h_i\sinh(\beta h_{i-1})-h_{i-1}\sinh(\beta h_i) \right|}
                   {\sinh(\beta h_i)(\cosh(\beta h_{i-1})-1)+\sinh(\beta h_{i-1})(\cosh(\beta h_i)-1)}
                                      \leqslant C\left|y'_i\right|(h_{i}-h_{i-1}),\,i=1,\ldots,N/2.
     \label{lema3}
   \end{equation}       
 \label{lema42}  
\end{lemma}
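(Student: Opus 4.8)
The plan is to strip off the factor $|y_i'|$ (it merely rides along) and prove the purely hyperbolic estimate
\[
\frac{\bigl|h_i\sinh(\beta h_{i-1})-h_{i-1}\sinh(\beta h_i)\bigr|}{\sinh(\beta h_i)(\cosh(\beta h_{i-1})-1)+\sinh(\beta h_{i-1})(\cosh(\beta h_i)-1)}\le C\,(h_i-h_{i-1})
\]
for $i=1,\dots,N/2$. Put $u=\beta h_{i-1}$ and $v=\beta h_i$; on this part of the mesh $h_{i-1}\le h_i$, so $0\le u\le v$. For the numerator I would use that $t\mapsto(\sinh t)/t$ is increasing, which gives $v\sinh u\le u\sinh v$, hence $\bigl|h_i\sinh(\beta h_{i-1})-h_{i-1}\sinh(\beta h_i)\bigr|=\tfrac1\beta(u\sinh v-v\sinh u)$. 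For the denominator $D$ I would simply discard one of its two nonnegative summands, keeping $D\ge\sinh u\,(\cosh v-1)$; which of the two is retained turns out to matter.

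The crux is a sharp enough upper bound on $u\sinh v-v\sinh u$ that still carries a factor $v-u$ — naively discarding lower-order terms loses this and fails both as $u\to v$ and as $u\to0$. I would obtain it from the representation $(\sinh x)/x=\int_0^1\cosh(xt)\,\dif t$: writing $u\sinh v-v\sinh u=uv\int_0^1(\cosh(vt)-\cosh(ut))\,\dif t$ and using $\cosh(vt)-\cosh(ut)=\int_{ut}^{vt}\sinh s\,\dif s\le(v-u)\,t\,\sinh(vt)$ (monotonicity of $\sinh$ on $[0,\infty)$), one gets, after an integration by parts,
\[
u\sinh v-v\sinh u\le uv\,(v-u)\int_0^1 t\sinh(vt)\,\dif t=\frac{u\,(v-u)}{v}\,(v\cosh v-\sinh v).
\]

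Assembling the three ingredients and recalling $h_i-h_{i-1}=(v-u)/\beta$,
\[
\frac{\bigl|h_i\sinh(\beta h_{i-1})-h_{i-1}\sinh(\beta h_i)\bigr|}{D}\le(h_i-h_{i-1})\cdot\frac{u}{\sinh u}\cdot g(v),\qquad g(v):=\frac{v\cosh v-\sinh v}{v(\cosh v-1)}.
\]
Now $u/\sinh u\le1$, and $g$ is continuous and positive on $(0,\infty)$ with $g(0^+)=2/3$ and $g(v)\to1$ as $v\to\infty$, hence bounded by an absolute constant; multiplying by $|y_i'|$ then yields \eqref{lema3}. The main obstacle is precisely this middle step: producing the bound on $u\sinh v-v\sinh u$ in a form that keeps the $v-u$ factor, together with the observation that $D$ must be bounded below by the $\sinh u\,(\cosh v-1)$ term — the other choice, $\sinh v\,(\cosh u-1)$, would leave $u/(\cosh u-1)$, which is unbounded as $u\to0$. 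The evaluation of $\int_0^1 t\sinh(vt)\,\dif t$ and the boundedness of $g$ are routine.
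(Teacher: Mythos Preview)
Your argument is correct, and it reaches the same conclusion by a genuinely different route from the paper's own proof.

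The paper works through power-series expansions: it writes the numerator as $\beta h_{i-1}h_i\sum_{n\ge1}\beta^{2n}(h_i^{2n}-h_{i-1}^{2n})/(2n+1)!$, bounds $h_i^{2n}-h_{i-1}^{2n}$ by $(h_i^2-h_{i-1}^2)\cdot n\,h_i^{2n-2}$ to extract the factor $h_i-h_{i-1}$, resums the remaining series to $(\cosh(\beta h_i)-1)/(\beta h_i)$, and then rewrites the \emph{full} denominator via the product identity $\sinh x(\cosh y-1)+\sinh y(\cosh x-1)=4\sinh\tfrac{x}{2}\sinh\tfrac{y}{2}\sinh\tfrac{x+y}{2}$ together with $\cosh x-1=2\sinh^2\tfrac{x}{2}$. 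After cancellation one is left with $\dfrac{\beta h_{i-1}}{\sinh(\beta h_{i-1}/2)}\cdot\dfrac{\sinh(\beta h_i/2)}{\sinh(\beta(h_{i-1}+h_i)/2)}\,(h_i-h_{i-1})$, both auxiliary factors being bounded by absolute constants.

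Your approach replaces the series manipulation by the integral representation $(\sinh x)/x=\int_0^1\cosh(xt)\,\dif t$, which yields the key bound $u\sinh v-v\sinh u\le \tfrac{u(v-u)}{v}(v\cosh v-\sinh v)$ directly, and then simply discards one summand of the denominator rather than invoking the product formula. This is more elementary and arguably cleaner; the remark that one must keep $\sinh u\,(\cosh v-1)$ and not the other term is exactly the right diagnosis. What the paper's version buys is methodological uniformity with the neighbouring Lemmas~\ref{lema43}--\ref{lema45}, all of which proceed by the same series-expansion template; your proof stands on its own but uses a one-off device.
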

\begin{proof} We have that 
\begin{equation}
  \begin{split} 
   &\left|y'_i\right| \frac{\left|h_i\sinh(\beta h_{i-1})-h_{i-1}\sinh(\beta h_i) \right|}
                   {\sinh(\beta h_i)(\cosh(\beta h_{i-1})-1)+\sinh(\beta h_{i-1})(\cosh(\beta h_i)-1)} \\
   &\hspace{.25cm}=\left|y'_i\right|
               \frac{\beta h_{i-1}h_i\sum_{n=1}^{+\infty}{ \tfrac{\beta^{2n}(h^{2n}_{i}-h^{2n}_{i-1})}{(2n+1)!}}}
                      {\sinh(\beta h_i)(\cosh(\beta h_{i-1})-1)+\sinh(\beta h_{i-1})(\cosh(\beta h_i)-1)}   \\  
    &\hspace{.25cm}              \leqslant\left|y'_i\right|
      \frac{\beta h_{i-1}h_i(h^2_{i}-h^2_{i-1}) \sum_{n=1}^{+\infty}{ \tfrac{\beta^{2n}h^{2n-2}_i}{(2n)!}}}
                     {\sinh(\beta h_i)(\cosh(\beta h_{i-1})-1)+\sinh(\beta h_{i-1})(\cosh(\beta h_i)-1)} \\
   &\hspace{.25cm}               \leqslant 2 \left| y'_i \right|
              \frac{\beta^2 h_{i-1}h_i(h_{i}-h_{i-1})\sum_{n=0}^{+\infty}{ \tfrac{\beta^{2n+1}h^{2n+1}_i}{(2n+2)!}}}
                     {4\sinh\tfrac{\beta h_{i-1}}{2}\sinh\tfrac{\beta h_i}{2}\sinh\tfrac{\beta h_{i-1}+\beta h_i}{2}}\\
   &\hspace{.25cm}      = 2\left| y'_i \right|
              \frac{\beta^2 h_{i-1}h_i(h_{i}-h_{i-1})\frac{\cosh(\beta h_i)-1}{\beta h_i}}
                     {4\sinh\tfrac{\beta h_{i-1}}{2}\sinh\tfrac{\beta h_i}{2}\sinh\tfrac{\beta h_{i-1}+\beta h_i}{2}} \\  
   &\hspace{.25cm}= \left| y'_i \right|
             \frac{\beta h_{i-1}(h_{i}-h_{i-1})\sinh^2\frac{\beta h_i}{2}}
                      {\sinh\tfrac{\beta h_{i-1}}{2}\sinh\tfrac{\beta h_i}{2}\sinh\tfrac{\beta h_{i-1}+\beta h_i}{2}}  
                  \leqslant  C\left|y'_i\right|(h_i-h_{i-1}).                      
  \end{split}
\label{bah3}
\end{equation}
\end{proof}
\begin{remark} It is true that $\sum_{n=0}^{+\infty}{ \tfrac{x^{2n+1}}{(2n+2)!}}=\tfrac{\cosh x-1}{x},\,\cosh x-1=2\sinh^2\frac{x}{2}$\\ 
       and  $\sinh x(\cosh y-1)+\sinh y(\cosh x-1)=4\sinh\frac{x}{2}\sinh\frac{y}{2}\sinh\frac{x+y}{2}.$
\end{remark}       

\begin{lemma} Let the mesh size $h_{i}=x_{i+1}-x_{i},$ be defined by the generating function \eqref{bakh11} and $\beta$ defined by \eqref{beta}. We have the following estimate
    \begin{equation}
      \begin{array}{c} 
      \left|y''_i\right|\left|\tfrac{\tfrac{1}{2}\left[ h^2_{i-1}\sinh(\beta h_i)+h^2_{i}\sinh(\beta h_{i-1})\right]
            -\tfrac{\varepsilon^2}{\gamma}\left[ \sinh(\beta h_i)(\cosh(\beta h_{i-1})-1)+\sinh(\beta h_{i-1})(\cosh(\beta h_i)-1)\right]}
              {\sinh(\beta h_i)(\cosh(\beta h_{i-1})-1)+\sinh(\beta h_{i-1})(\cosh(\beta h_i)-1)}\right|\\
              \leqslant C\left| y''_i\right|h^2_i,\:i=1,\ldots,N/2.
      \end{array}
      \label{lema4}
    \end{equation}
\label{lema43}
\end{lemma}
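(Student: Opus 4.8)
The plan is to reduce the estimate to the two auxiliary quantities $u:=\beta h_{i-1}$ and $v:=\beta h_i$, and to exploit the identity $\varepsilon^2/\gamma=1/\beta^2$, which is immediate from \eqref{beta}. First I would multiply both the numerator and the denominator inside the absolute value in \eqref{lema4} by $\beta^2$. Since $\beta^2 h_{i-1}^2=u^2$ and $\beta^2 h_i^2=v^2$, the scaled numerator becomes
\[
\tfrac12\bigl[u^2\sinh v+v^2\sinh u\bigr]-\bigl[\sinh v(\cosh u-1)+\sinh u(\cosh v-1)\bigr],
\]
while the scaled denominator is $D:=\sinh v(\cosh u-1)+\sinh u(\cosh v-1)>0$. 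Regrouping the $\sinh v$ and $\sinh u$ terms and inserting the series $\cosh x-1=\sum_{n\geqslant 1}x^{2n}/(2n)!$ (see the Remark), the quadratic parts cancel exactly and the scaled numerator equals $-\sinh v\sum_{n\geqslant 2}u^{2n}/(2n)!-\sinh u\sum_{n\geqslant 2}v^{2n}/(2n)!$, which is a single-sign quantity.

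The second step is a tail estimate of the remaining series. Writing $\sum_{n\geqslant 2}x^{2n}/(2n)!=x^2\sum_{m\geqslant 1}x^{2m}/(2m+2)!$ and using $(2m+1)(2m+2)\geqslant 12$ for $m\geqslant 1$, one obtains
\[
\sum_{n\geqslant 2}\frac{x^{2n}}{(2n)!}\leqslant\frac{x^2}{12}\sum_{m\geqslant 1}\frac{x^{2m}}{(2m)!}=\frac{x^2}{12}\bigl(\cosh x-1\bigr).
\]
Applying this with $x=u$ and $x=v$ shows that the absolute value of the scaled numerator is at most $\tfrac1{12}\bigl[u^2\sinh v(\cosh u-1)+v^2\sinh u(\cosh v-1)\bigr]$.

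Finally I would use the hypothesis $i\leqslant N/2$, which (as recorded in the text following Theorem \ref{teorema2}) implies $h_{i-1}\leqslant h_i$, hence $u\leqslant v$ and in particular $u^2\leqslant v^2$. Bounding both $u^2$ and $v^2$ by $v^2$ collapses the previous bound to $\tfrac{v^2}{12}D$. Dividing by $\beta^2 D$ and using $v^2/\beta^2=h_i^2$ gives that the left-hand side of \eqref{lema4} is at most $\tfrac1{12}\left|y''_i\right|h_i^2\leqslant C\left|y''_i\right|h_i^2$, as required.

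I do not expect a genuine obstacle: the argument is elementary bookkeeping with series. The single point requiring care is recognizing that the $\varepsilon^2/\gamma$ term is precisely the quadratic Taylor contribution of the hyperbolic functions, so that after scaling by $\beta^2$ the $O(u^2)$ and $O(v^2)$ terms cancel and only the $O(u^4)$, $O(v^4)$ tails survive; without this cancellation one would obtain only an $O(1)$ bound instead of the desired $O(h_i^2)$.
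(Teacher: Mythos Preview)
Your proposal is correct and follows essentially the same route as the paper: rewrite $\varepsilon^2/\gamma=1/\beta^2$, group the numerator as $\sinh(\beta h_i)\bigl(\tfrac{h_{i-1}^2}{2}-\tfrac{\cosh(\beta h_{i-1})-1}{\beta^2}\bigr)+\sinh(\beta h_{i-1})\bigl(\tfrac{h_i^2}{2}-\tfrac{\cosh(\beta h_i)-1}{\beta^2}\bigr)$ so that the quadratic Taylor terms cancel, bound the remaining tail by a multiple of $\cosh-1$, and finish using $h_{i-1}\leqslant h_i$. The only cosmetic differences are your substitution $u=\beta h_{i-1}$, $v=\beta h_i$ and your sharper explicit constant $\tfrac{1}{12}$ in the tail estimate.
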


\begin{proof} We have that
\begin{equation}
   \begin{split} 
    &\left|y''_i\right|\left|\tfrac{\tfrac{1}{2}\left[ h^2_{i-1}\sinh(\beta h_i)+h^2_{i}\sinh(\beta h_{i-1})\right]
            -\tfrac{\varepsilon^2}{\gamma}\left[ \sinh(\beta h_i)(\cosh(\beta h_{i-1})-1)+\sinh(\beta h_{i-1})(\cosh(\beta h_i)-1)\right]}
              {\sinh(\beta h_i)(\cosh(\beta h_{i-1})-1)+\sinh(\beta h_{i-1})(\cosh(\beta h_i)-1)} \right|\\
    &\hspace{.25cm}= \left| y''_i\right|
                \left|  \frac{\sinh(\beta h_i)\left(\tfrac{h^2_{i-1}}{2}-\tfrac{\cosh(\beta h_{i-1})-1}{\beta^2} \right)
                                           +\sinh(\beta h_{i-1})\left(\tfrac{h^2_{i}}{2}-\tfrac{\cosh(\beta h_{i})-1}{\beta^2} \right)}
               {\sinh(\beta h_i)(\cosh(\beta h_{i-1})-1)+\sinh(\beta h_{i-1})(\cosh(\beta h_i)-1)} \right| \\ 
    &\hspace{.25cm}= \left| y''_i\right|\left[\frac{h^2_{i-1}\sinh(\beta h_i)\left(\tfrac{\beta^2h^2_{i-1}}{4!}+\tfrac{\beta^4h^4_{i-1}}{6!}+\cdots \right)     }
               {\sinh(\beta h_i)(\cosh(\beta h_{i-1})-1)+\sinh(\beta h_{i-1})(\cosh(\beta h_i)-1)} \right.   \\  
    &\hspace{2.6cm}  +\left.\frac{h^2_i \sinh(\beta h_{i-1})\left(\tfrac{\beta^2h^2_{i}}{4!}+\tfrac{\beta^4h^4_{i}}{6!}+\cdots \right)}
               {\sinh(\beta h_i)(\cosh(\beta h_{i-1})-1)+\sinh(\beta h_{i-1})(\cosh(\beta h_i)-1)}\right]\\               
   &\hspace{.25cm}\leqslant\left| y''_i\right|  \frac{h^2_{i-1}\sinh(\beta h_i)\left(\cosh(\beta h_{i-1})-1\right)
                                           +h^2_i \sinh(\beta h_{i-1})\left(\cosh(\beta h_i)-1\right)}
               {\sinh(\beta h_i)(\cosh(\beta h_{i-1})-1)+\sinh(\beta h_{i-1})(\cosh(\beta h_i)-1)}    \\                                                    
   &\hspace{.25cm}\leqslant\left| y''_i\right| h^2_i \frac{\tfrac{h^2_{i-1}}{h^2_i}\sinh(\beta h_i)\left(\cosh(\beta h_{i-1})-1\right)
                                           + \sinh(\beta h_{i-1})\left(\cosh(\beta h_i)-1\right)}
               {\sinh(\beta h_i)(\cosh(\beta h_{i-1})-1)+\sinh(\beta h_{i-1})(\cosh(\beta h_i)-1)}\\&\hspace{.25cm} \leqslant C\left| y''_i\right|h^2_i  .         
   \end{split}
\label{bah4}
\end{equation}
\end{proof}
\begin{remark} It is true that $\sum_{n=0}^{+\infty}{\frac{x^{2n+2}}{(2n+4)!}}=\frac{\cosh x-1-\tfrac{x^2}{2}}{x^2},$  
           $0<\frac{\frac{\cosh x-1-\tfrac{x^2}{2}}{x^2}}{\cosh x-1}<1,$  and \\
          $\frac{\cosh x-1-\tfrac{x^2}{2}}{x^2}\leqslant C(\cosh x-1).$
\end{remark}

\begin{lemma}Let the mesh size $h_{i}=x_{i+1}-x_{i},$ be defined by the generating function \eqref{bakh11} and $\beta$ defined by \eqref{beta}. We have the following estimate 
   \begin{multline}
     \left|y'''_i\right|\left|\frac{h^3_{i}\sinh(\beta h_{i-1})-h^3_{i-1}\sinh(\beta h_i)}
               {\sinh(\beta h_i)(\cosh(\beta h_{i-1})-1)+\sinh(\beta h_{i-1})(\cosh(\beta h_i)-1)} \right|\\
                     \leqslant C\left|y'''_i\right|\left(\varepsilon^2+h^2_{i-1}\right)(h_i-h_{i-1}),\:
                     i=1,\ldots,N/2.
   \label{lema5}
   \end{multline}
\label{lema44}
\end{lemma}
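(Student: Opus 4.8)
The plan is to follow the pattern of the proofs of Lemmas~\ref{lema42} and~\ref{lema43}: expand $\sinh(\beta h_{i-1})$ and $\sinh(\beta h_i)$ in power series, use $h_{i-1}\leqslant h_i$ (which holds on the part of the mesh corresponding to $[0,1/2]$) to factor $h_i-h_{i-1}$ out of the numerator, and rewrite the denominator by means of the product identity recalled in the Remark after Lemma~\ref{lema42}, namely $\sinh x(\cosh y-1)+\sinh y(\cosh x-1)=4\sinh\frac{x}{2}\sinh\frac{y}{2}\sinh\frac{x+y}{2}$.

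First I would insert the Taylor series and collect the $n=0$ and $n=1$ terms separately, which gives
\[
h^3_i\sinh(\beta h_{i-1})-h^3_{i-1}\sinh(\beta h_i)
  =h_{i-1}h_i\left[\beta\bigl(h^2_i-h^2_{i-1}\bigr)
   +h^2_{i-1}h^2_i\sum_{n=2}^{+\infty}\tfrac{\beta^{2n+1}}{(2n+1)!}\bigl(h^{2n-2}_{i-1}-h^{2n-2}_i\bigr)\right].
\]
Then, using $h^2_i-h^2_{i-1}=(h_i-h_{i-1})(h_i+h_{i-1})\leqslant2h_i(h_i-h_{i-1})$ and $h^{2n-2}_i-h^{2n-2}_{i-1}\leqslant(2n-2)h^{2n-3}_i(h_i-h_{i-1})$, I would pull $h_i-h_{i-1}$ out; with $t:=\beta h_i$ the residual power series sums in closed form, $\sum_{n=2}^{+\infty}\tfrac{(2n-2)t^{2n+1}}{(2n+1)!}=t\cosh t-3\sinh t+2t$. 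For the denominator $D:=4\sinh\frac{\beta h_{i-1}}{2}\sinh\frac{\beta h_i}{2}\sinh\frac{\beta(h_{i-1}+h_i)}{2}$ I would use two different lower bounds: $D\geqslant\tfrac{1}{2}\beta^3h_{i-1}h^2_i$ (from $\sinh u\geqslant u$ in every factor together with $h_{i-1}+h_i\geqslant h_i$), which turns the $\beta(h^2_i-h^2_{i-1})$ part into $\tfrac{4}{\beta^2}(h_i-h_{i-1})=\tfrac{4\varepsilon^2}{\gamma}(h_i-h_{i-1})$; and $D\geqslant\beta h_{i-1}(\cosh(\beta h_i)-1)$ (from $\sinh\frac{\beta h_{i-1}}{2}\geqslant\frac{\beta h_{i-1}}{2}$, $\sinh\frac{\beta(h_{i-1}+h_i)}{2}\geqslant\sinh\frac{\beta h_i}{2}$ and $2\sinh^2u=\cosh2u-1$), which controls the residual-series part once one notices the elementary inequality
\[
\frac{t\cosh t-3\sinh t+2t}{t\left(\cosh t-1\right)}\leqslant1,\qquad t>0,
\]
equivalent to $\sinh t\geqslant t$; combined with $h^2_{i-1}\leqslant h^2_i$ this bounds that part by $h^2_{i-1}(h_i-h_{i-1})$. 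Adding the two contributions and multiplying by $\left|y'''_i\right|$ then yields \eqref{lema5}.

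The only step requiring care is this last cancellation: the residual series $t\cosh t-3\sinh t+2t$ grows like $t\cosh t$ as $t=\beta h_i\to+\infty$, so it is essential to reconstruct a full factor $\cosh(\beta h_i)-1$ in the lower bound for $D$ --- that is, to keep the product $\sinh\frac{\beta h_i}{2}\sinh\frac{\beta(h_{i-1}+h_i)}{2}\geqslant\sinh^2\frac{\beta h_i}{2}$ intact --- rather than estimating every hyperbolic sine below by its argument. Everything else is the same kind of routine power-series bookkeeping already carried out in \eqref{bah3} and \eqref{bah4}.
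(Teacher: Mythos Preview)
Your argument is correct and follows essentially the same route as the paper: split the numerator into its leading term (which, against the leading part of the denominator, produces the $\varepsilon^{2}(h_i-h_{i-1})$ contribution) and a residual power series (which, against the full denominator rewritten via the product identity, yields the $h_{i-1}^{2}(h_i-h_{i-1})$ contribution). The only cosmetic difference is the closed form chosen for the residual series---the paper bounds it instead by $\cosh(\beta h_i)-1-\tfrac{\beta^{2}h_i^{2}}{2}$ and compares that with $\sinh\tfrac{\beta h_i}{2}\sinh\tfrac{\beta(h_{i-1}+h_i)}{2}$---but the mechanism and the resulting estimate are identical.
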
 
\begin{proof}We have that
\begin{equation}
   \begin{split} 
       & \left|y'''_i\right|\left|\frac{h^3_{i}\sinh(\beta h_{i-1})-h^3_{i-1}\sinh(\beta h_i)}
               {\sinh(\beta h_i)(\cosh(\beta h_{i-1})-1)+\sinh(\beta h_{i-1})(\cosh(\beta h_i)-1)} \right|\\
        &\hspace{.25cm}\leqslant  \left|y'''_i\right|
                          \left|\tfrac{\beta h_{i-1}h_i(h_{i-1}+h_i)(h_{i}-h_{i-1})}
                                       {\beta h_i\tfrac{\beta^2h^2_{i-1}}{2}+\beta h_{i-1}\tfrac{\beta^2h^2_i}{2}}
                                +\tfrac{\beta^3h^3_{i-1}h^3_i\sum_{n=1}^{+\infty}\frac{\beta^{2n}(h^{2n}_{i}-h^{2n}_{i-1})}{(2n+3)!}}
                                {\sinh(\beta h_i)(\cosh(\beta h_{i-1})-1)+\sinh(\beta h_{i-1})(\cosh(\beta h_i)-1)}            \right|\\
        &\hspace{.25cm}=\left|y'''_i\right|
                         \Bigg|\tfrac{2}{\gamma}\varepsilon^2(h_i-h_{i-1})\\
        &\hspace{1cm}    +\left.\frac{\beta^3h^3_{i-1}h^3_i(h^2_{i}-h^2_{i-1})\sum_{n=0}^{+\infty}\frac{\beta^{2n+2}(n+1)h^{2n}_i}{(2n+5)!}}
                                {\sinh(\beta h_i)(\cosh(\beta h_{i-1})-1)+\sinh(\beta h_{i-1})(\cosh(\beta h_i)-1)}            \right|\\   
        &\hspace{.25cm}\leqslant \left|y'''_i\right|
                        \left|\tfrac{2}{\gamma}\varepsilon^2(h_i-h_{i-1})
                                 +\frac{\beta^3h^3_{i-1}h^3_i(h_{i}-h_{i-1})(h_{i-1}+h_i)\sum_{n=0}^{+\infty}\frac{\beta^{2n+2}h^{2n}_i}{(2n+4)!}}
                                {4\sinh\tfrac{\beta h_{i-1}}{2}\sinh\tfrac{\beta h_{i}}{2}\sinh\tfrac{\beta (h_{i-1}+h_i)}{2}}            \right|
                                \\  
        &\hspace{.25cm}\leqslant  \left|y'''_i\right|
                        \left|\tfrac{2}{\gamma}\varepsilon^2(h_i-h_{i-1})
                                 +2\frac{\beta h^3_{i-1}(h_{i}-h_{i-1})\sum_{n=0}^{+\infty}\frac{\beta^{2n+4}h^{2n+4}_i}{(2n+4)!}}
                                {4\sinh\tfrac{\beta h_{i-1}}{2}\sinh\tfrac{\beta h_{i}}{2}\sinh\tfrac{\beta (h_{i-1}+h_i)}{2}}            \right|\\
        &\hspace{.25cm}\leqslant  \left|y'''_i\right|
                        \left|\tfrac{2}{\gamma}\varepsilon^2(h_i-h_{i-1})
                        +h^2_{i-1}(h_i-h_{i-1})\tfrac{\tfrac{\beta h_{i-1}}{2}}{\sinh\tfrac{\beta h_{i-1}}{2}}
                                 \cdot\frac{\cosh(\beta h_i)-1-\tfrac{\beta^2h^2_i}{2}}
                                {\sinh\tfrac{\beta h_{i}}{2}\sinh\tfrac{\beta (h_{i-1}+h_i)}{2}}            \right|\\                        
        &\hspace{.25cm}\leqslant C\left|y'''_i\right|\left(\varepsilon^2+h^2_{i-1}\right)|h_i-h_{i-1}|  .
   \end{split}
\label{bah5}
\end{equation}
\end{proof}

\begin{remark}It is true that $\sum_{n=0}^{+\infty}\frac{\beta^{2n+4}h^{2n+4}_i}{(2n+4)!}=\cosh(\beta h_i)-1-\tfrac{\beta^2h^2_i}{2}$ 
          and\\ $0<\frac{\cosh(\beta h_i)-1-\tfrac{\beta^2h^2_i}{2}}
                                {\sinh\tfrac{\beta h_{i}}{2}\sinh\tfrac{\beta (h_{i-1}+h_i)}{2}} <2.$ 
\end{remark}                                
 
\begin{lemma}Let the mesh size $h_{i}=x_{i+1}-x_{i},$ be defined by the generating function \eqref{bakh11} and $\beta$ defined by \eqref{beta}. We have the following estimate 
   \begin{multline}
     \left|\frac{y^{(iv)}(\zeta^{-}_{i-1})h^4_{i-1}\sinh(\beta h_i)+y^{(iv)}(\zeta^{+}_i)h^4_i\sinh(\beta h_{i-1})}
                               {\sinh(\beta h_i)(\cosh(\beta h_{i-1})-1)+\sinh(\beta h_{i-1})(\cosh(\beta h_i)-1)}\right|\\
 \leqslant C\varepsilon^2\left(\left|y^{(iv)}(\zeta^{-}_{i-1})\right|h^2_{i-1}+\left|y^{(iv)}(\zeta^{+}_i)\right|h^2_{i} \right),\,i=1,\ldots,N/2.                                                 
     \label{lema6}
   \end{multline}
\label{lema45}
\end{lemma}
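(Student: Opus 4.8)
The plan is to bound each of the two summands in the numerator separately, since by the triangle inequality the claim reduces to showing
\[
\frac{h^4_{i-1}\sinh(\beta h_i)}{D}\leqslant C\varepsilon^2 h^2_{i-1}
\qquad\text{and}\qquad
\frac{h^4_{i}\sinh(\beta h_{i-1})}{D}\leqslant C\varepsilon^2 h^2_{i},
\]
where $D=\sinh(\beta h_i)(\cosh(\beta h_{i-1})-1)+\sinh(\beta h_{i-1})(\cosh(\beta h_i)-1)$. As in Lemmas~\ref{lema42}--\ref{lema44}, I would first rewrite the denominator via the identity recorded in the remark after Lemma~\ref{lema42}, namely $D=4\sinh\tfrac{\beta h_{i-1}}{2}\sinh\tfrac{\beta h_{i}}{2}\sinh\tfrac{\beta(h_{i-1}+h_i)}{2}$, and split each hyperbolic sine in the numerator through $\sinh(\beta h_j)=2\sinh\tfrac{\beta h_j}{2}\cosh\tfrac{\beta h_j}{2}$, which cancels one factor between numerator and denominator in each term.

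For the first term this produces $\dfrac{h^4_{i-1}\cosh\tfrac{\beta h_i}{2}}{2\sinh\tfrac{\beta h_{i-1}}{2}\sinh\tfrac{\beta(h_{i-1}+h_i)}{2}}$, and the key estimate is the superadditivity bound
\[
\sinh\tfrac{\beta(h_{i-1}+h_i)}{2}=\sinh\tfrac{\beta h_{i-1}}{2}\cosh\tfrac{\beta h_i}{2}+\cosh\tfrac{\beta h_{i-1}}{2}\sinh\tfrac{\beta h_i}{2}\geqslant \sinh\tfrac{\beta h_{i-1}}{2}\cosh\tfrac{\beta h_i}{2},
\]
which cancels $\cosh\tfrac{\beta h_i}{2}$ and leaves the term at most $\dfrac{h^4_{i-1}}{2\sinh^2\tfrac{\beta h_{i-1}}{2}}$. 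Now $\sinh t\geqslant t$ together with $\beta^2=\gamma/\varepsilon^2$ gives $\sinh^2\tfrac{\beta h_{i-1}}{2}\geqslant \tfrac{\beta^2 h^2_{i-1}}{4}=\tfrac{\gamma h^2_{i-1}}{4\varepsilon^2}$, hence the first term is $\leqslant \tfrac{2\varepsilon^2}{\gamma}h^2_{i-1}\leqslant C\varepsilon^2 h^2_{i-1}$. The second term is handled symmetrically: after the same splitting it equals $\dfrac{h^4_{i}\cosh\tfrac{\beta h_{i-1}}{2}}{2\sinh\tfrac{\beta h_{i}}{2}\sinh\tfrac{\beta(h_{i-1}+h_i)}{2}}$, and here I would instead use $\sinh\tfrac{\beta(h_{i-1}+h_i)}{2}\geqslant \cosh\tfrac{\beta h_{i-1}}{2}\sinh\tfrac{\beta h_i}{2}$ to reduce it to $\dfrac{h^4_i}{2\sinh^2\tfrac{\beta h_i}{2}}\leqslant\tfrac{2\varepsilon^2}{\gamma}h^2_i$. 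Restoring the factors $|y^{(iv)}(\zeta^-_{i-1})|$ and $|y^{(iv)}(\zeta^+_i)|$ and adding the two bounds yields \eqref{lema6}.

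There is no genuine obstacle here; the only point needing mild care is choosing which of the two products in the expansion of $\sinh\tfrac{\beta(h_{i-1}+h_i)}{2}$ to discard so that the surviving cosine in the numerator cancels cleanly — a different choice for each of the two terms. Note that, unlike the earlier lemmas, the relation $h_{i-1}\leqslant h_i$ valid for $i\leqslant N/2$ is not even used: the estimate is symmetric in $h_{i-1},h_i$ and holds on the whole mesh. The factor $\varepsilon^2$ on the right is precisely the gain from $1/\beta^2=\varepsilon^2/\gamma$, which converts two of the four powers of the mesh size in the numerator into $\varepsilon^2$.
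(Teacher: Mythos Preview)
Your proof is correct. Both your argument and the paper's split the fraction by the triangle inequality and extract the factor $1/\beta^{2}=\varepsilon^{2}/\gamma$ from a lower bound on the denominator, so the strategy is the same. The algebraic route differs slightly: the paper keeps the denominator in its original sum form, applies $\cosh x-1\geqslant x^{2}/2$ directly, and then uses the mesh monotonicity $h_{i-1}\leqslant h_{i}$ (valid for $i\leqslant N/2$) to simplify each term; you instead pass to the product form $D=4\sinh\tfrac{\beta h_{i-1}}{2}\sinh\tfrac{\beta h_{i}}{2}\sinh\tfrac{\beta(h_{i-1}+h_{i})}{2}$, use the addition formula for $\sinh$ to cancel the surviving cosines, and finish with $\sinh t\geqslant t$. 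Your variant is a little cleaner and, as you observe, fully symmetric in $h_{i-1},h_{i}$, so it does not rely on the ordering $h_{i-1}\leqslant h_{i}$ that the paper's proof invokes.
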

\begin{proof} We have that
\begin{equation}
  \begin{split} 
      &\left|\frac{y^{(iv)}(\zeta^{-}_{i-1})h^4_{i-1}\sinh(\beta h_i)+y^{(iv)}(\zeta^{+}_i)h^4_i\sinh(\beta h_{i-1})}
                               {\sinh(\beta h_i)(\cosh(\beta h_{i-1})-1)+\sinh(\beta h_{i-1})(\cosh(\beta h_i)-1)}\right|\\
      &\hspace{1.25cm}\leqslant 
                \frac{\left|y^{(iv)}(\zeta^{-}_{i-1})\right|h^4_{i-1}\sinh(\beta h_i)}
                               {\sinh(\beta h_i)\tfrac{\beta^2 h^2_{i-1}}{2}+\sinh(\beta h_{i-1})\tfrac{\beta^2h^2_{i}}{2}} 
                 +\frac{\left|y^{(iv)}(\zeta^{+}_i)\right|h^4_i\sinh(\beta h_{i-1})}
                               {\sinh(\beta h_i)\tfrac{\beta^2 h^2_{i-1}}{2}+\sinh(\beta h_{i-1})\tfrac{\beta^2h^2_{i}}{2}}      \\
      &\hspace{1.25cm}\leqslant 2
                \left(\frac{\left|y^{(iv)}(\zeta^{-}_{i-1})\right|h^4_{i-1}\sinh(\beta h_i)}
                               {\beta^2h^2_{i-1}\left( \sinh(\beta h_i)+\sinh(\beta h_{i-1})\right)} 
                 +\frac{\left|y^{(iv)}(\zeta^{+}_i)\right|h^4_i\sinh(\beta h_{i-1})}
                               {\beta^2\sinh(\beta h_{i-1})\left( h^2_{i-1}+h^2_{i} \right)   } \right)   \\  
      &\hspace{1.25cm}\leqslant C\varepsilon^2\left(\left|y^{(iv)}(\zeta^{-}_{i-1})\right|h^2_{i-1}+\left|y^{(iv)}(\zeta^{+}_i)\right|h^2_{i} \right).                                                 
  \end{split}
\label{bah6}
\end{equation}
\end{proof}

Let us continue with the following lemma that will be further used in the proof of the $\varepsilon$--uniform convergence theorem. In the lemma quite rough estimate of $F_i$, is given  but it is fairly enough for our needs.

\begin{lemma}Let the mesh size $h_{i}=x_{i+1}-x_{i},$ be defined by the generating function \eqref{bakh11} and $\beta$ defined by \eqref{beta}. We have the following estimate
\begin{equation}
  \begin{split} 
  \left|F_iy\right|\leqslant C\left[\left|s_{i-1}\right|\left( \tfrac{\varepsilon^2}{h^2_{i-1}}+\tfrac{\varepsilon^2}{h^2_i}+4\gamma+q+2\right) +
                       \frac{1}{N^2} \right]                        ,\:i=1,2,\ldots, N/2.
  \end{split}
 \label{konz1}
\end{equation}
\end{lemma}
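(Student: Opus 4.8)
The statement to prove is the rough bound \eqref{konz1} for $|F_iy|$ on the layer part of the mesh, $i=1,\dots,N/2$. The strategy is to combine the decomposition $F_iy=P_i+Q_i+R_i$ from \eqref{bah1a}--\eqref{bah7} with the five technical Lemmas \ref{lema41}--\ref{lema45} already established, the solution decomposition $y=r+s$ from Theorem \ref{teorema2}, and the mesh-size estimates \eqref{korak1}--\eqref{korak2} from the Mesh Lemma. Since $F_iy$ is a linear combination of $y$'s Taylor data at $x_i$ (values of $y'_i,y''_i,y'''_i$ and remainder terms involving $y^{(iv)}$ at interior points), I would first bound each of $P_i,Q_i,R_i$ in terms of $|y'_i|,|y''_i|,|y'''_i|$ and $|y^{(iv)}|h^2$ combinations, and then invoke \eqref{regularna}--\eqref{slojna} to turn every occurrence of a derivative $y^{(j)}$ into $C+C\varepsilon^{-j}(e^{-x_i\sqrt m/\varepsilon}+e^{-(1-x_i)\sqrt m/\varepsilon})$.

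First I would handle $P_i=\gamma(y_{i-1}-2y_i+y_{i+1})$: by Lemma \ref{lema41} this is at most $C(|y'_i|(h_i-h_{i-1})+|y''(\delta^-_{i-1})|h^2_{i-1}+|y''(\delta^+_i)|h^2_i)$, i.e.\ controlled by $|y'_i|(h_i-h_{i-1})$ plus $|y''|h^2_i$ contributions. For $Q_i$ and $R_i$ the point is that each fraction appearing in \eqref{bah2} and \eqref{bah7} is exactly of the form estimated in one of Lemmas \ref{lema42}--\ref{lema45}: the $y'_i$-term is bounded via \eqref{lema3} by $C|y'_i|(h_i-h_{i-1})$; the combined $y''_i$-term (note the $\varepsilon^2/\gamma$ cancellation built into \eqref{bah2}, \eqref{bah7}) is bounded via \eqref{lema4} by $C|y''_i|h^2_i$; the $y'''_i$-term is bounded via \eqref{lema5} by $C|y'''_i|(\varepsilon^2+h^2_{i-1})(h_i-h_{i-1})$, plus the explicit extra term $y'''_i\varepsilon^2(h_{i-1}-h_i)$ in $R_i$ which is of the same type; and the $y^{(iv)}$-terms are bounded via \eqref{lema6} by $C\varepsilon^2(|y^{(iv)}(\zeta^-_{i-1})|h^2_{i-1}+|y^{(iv)}(\zeta^+_i)|h^2_i)$, together with the explicit $\frac{\varepsilon^2}{2}[y^{(iv)}(\mu^-_{i-1})h^2_{i-1}+y^{(iv)}(\mu^+_i)h^2_i]$ term of $R_i$. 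So after these substitutions,
\[
|F_iy|\leqslant C\Big(|y'_i|(h_i-h_{i-1})+|y''_i|h^2_i+|y'''_i|(\varepsilon^2+h^2_{i-1})(h_i-h_{i-1})+\varepsilon^2\big(|y^{(iv)}(\xi)|h^2_{i-1}+|y^{(iv)}(\eta)|h^2_i\big)\Big),
\]
with $\xi,\eta$ generic points in $(x_{i-1},x_{i+1})$.

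Next I would insert $y=r+s$. The regular part: $|r^{(j)}|\leqslant C$, so all $r$-contributions are bounded by $C(h_i-h_{i-1})+Ch^2_i$, which by \eqref{korak2} and \eqref{korak1} is $\leqslant CN^{-2}$ — this feeds the $\frac{1}{N^2}$ term of \eqref{konz1}. For the layer part, $|s^{(j)}(x)|\leqslant C\varepsilon^{-j}(e^{-x\sqrt m/\varepsilon}+e^{-(1-x)\sqrt m/\varepsilon})$; on $[0,1/2]$ we may drop the second exponential up to a constant, and the mesh is monotone, $h_{i-1}\leqslant h_i$. Here the key mechanism is monotonicity of $e^{-x\sqrt m/\varepsilon}$ together with $h_i\leqslant CN^{-1}$: the interior evaluation points $\xi,\eta,\delta^\pm$ all lie in $(x_{i-1},x_{i+1})$, so $e^{-\xi\sqrt m/\varepsilon}\leqslant e^{-x_{i-1}\sqrt m/\varepsilon}$, and since $|x_i-x_{i-1}|=h_{i-1}\leqslant CN^{-1}$, one has $e^{-x_{i-1}\sqrt m/\varepsilon}\leqslant C e^{-x_i\sqrt m/\varepsilon}$ — but more directly I want to bound $e^{-x_{i-1}\sqrt m/\varepsilon}$ by $C|s_{i-1}|$-type quantities; in fact $e^{-x_{i-1}\sqrt m/\varepsilon}+e^{-(1-x_{i-1})\sqrt m/\varepsilon}\leqslant |s_{i-1}|/C$ is not literally true, so instead I bound $|s^{(j)}(\xi)|$ for $\xi\in(x_{i-1},x_{i+1})$ by $C\varepsilon^{-j}e^{-x_{i-1}\sqrt m/\varepsilon}$ and then note $e^{-x_{i-1}\sqrt m/\varepsilon}\leqslant C$ times the value of the exponential factor underlying $s_{i-1}$. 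The $\varepsilon^{-j}$ from $s^{(j)}$ combines with the powers of $h$: the $y'_i$-term gives $\varepsilon^{-1}(h_i-h_{i-1})\leqslant\varepsilon^{-1}CN^{-2}$ — and here I use $h_i-h_{i-1}\leqslant CN^{-2}$ together with $h_i\leqslant CN^{-1}$ in the form $\varepsilon^{-1}(h_i-h_{i-1})\leqslant \varepsilon^{-1}h_i\cdot\frac{h_i-h_{i-1}}{h_i}$; the $y''_i$-term gives $\varepsilon^{-2}h^2_i$; the $y'''_i$-term gives $\varepsilon^{-3}(\varepsilon^2+h^2_{i-1})(h_i-h_{i-1})$; and the $y^{(iv)}$-terms give $\varepsilon^2\varepsilon^{-4}h^2_{i-1}=\varepsilon^2 h^2_{i-1}/\varepsilon^{4}$. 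Collecting the ones that survive as $\varepsilon^2/h^2$-type and constant-type coefficients times $|s_{i-1}|$, and absorbing the genuinely $N^{-2}$-small ones into $\frac{1}{N^2}$, yields \eqref{konz1}.

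**Main obstacle.** The delicate point is the bookkeeping of the exponential weights: I must justify that $|s^{(j)}|$ evaluated at the various interior Taylor-remainder points (which lie in $(x_{i-1},x_{i+1})$, possibly to the right of $x_{i-1}$) can all be dominated by a single factor proportional to $|s_{i-1}|$ (equivalently, the exponential $e^{-x_{i-1}\sqrt m/\varepsilon}$), using $h_{i-1}\leqslant h_i\leqslant CN^{-1}$ to control the ratio $e^{-x_{i-1}\sqrt m/\varepsilon}/e^{-x_{i+1}\sqrt m/\varepsilon}\leqslant e^{2\sqrt m h_i/\varepsilon}$, and crucially that on the Bakhvalov layer region $\beta h_i$ (hence $\sqrt m h_i/\varepsilon$) stays bounded, so this ratio is $\leqslant C$. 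Establishing that $\beta h_i\leqslant C$ on $i\leqslant N/2$ is exactly where the specific structure of the generating function $\kappa(t)=a\varepsilon t/(p-t)$ enters; I expect this to be the one place where I cannot merely cite an earlier lemma but must do a short direct computation (or cite the analogous property from \cite{vulanovic1993}). Everything else is substitution of the already-proved Lemmas \ref{lema41}--\ref{lema45} and the elementary estimates \eqref{korak1}--\eqref{korak2}, \eqref{regularna}--\eqref{slojna}.
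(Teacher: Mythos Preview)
Your plan does not prove the stated estimate \eqref{konz1}. The Taylor route via Lemmas~\ref{lema41}--\ref{lema45} produces the bound
\[
|F_is|\leqslant C\Big(|s'_i|(h_i-h_{i-1})+|s''_i|h_i^2+|s'''_i|(\varepsilon^2+h_{i-1}^2)(h_i-h_{i-1})+\varepsilon^2|s^{(iv)}|h_i^2\Big),
\]
whose dominant layer term is of order $|s_{i-1}|\,h_i^2/\varepsilon^2$. This is the \emph{reciprocal} of the structure $|s_{i-1}|\,\varepsilon^2/h_i^2$ claimed in \eqref{konz1}; the two are not comparable unless $h_i\leqslant C\varepsilon$, which fails outside the fine layer zone (and in particular in the regime of Case~III where this lemma is actually invoked). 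Your ``collecting'' step therefore does not go through, and the obstacle you single out ($\beta h_i\leqslant C$) is both not generally true for $i\leqslant N/2$ and not what is needed.

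The paper obtains the $\varepsilon^2/h^2$ coefficients by a different, deliberately crude argument applied \emph{only} to the layer component, with no Taylor expansion at all. One starts from the raw form of $F_iy$, replaces $f(x_j,y_j)$ by $\varepsilon^2 y''_j$, and splits $y=r+s$. For $s$ one bounds the differences trivially, $|s_{i-1}-s_i|\leqslant C|s_{i-1}|$, $|s_{i+1}-s_i|\leqslant C|s_{i-1}|$, $\varepsilon^2|s''_{j}|\leqslant C|s_{i-1}|$, and bounds the hyperbolic coefficients of the scheme directly:
\[
\frac{a_i}{\Delta d_i+\Delta d_{i+1}}\leqslant\frac{1}{\cosh(\beta h_{i-1})-1}\leqslant\frac{2}{\beta^2 h_{i-1}^2}=\frac{2\varepsilon^2}{\gamma h_{i-1}^2},
\]
and similarly with $h_i$. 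This is exactly where $\varepsilon^2/h_{i-1}^2+\varepsilon^2/h_i^2$ comes from; the remaining terms $\gamma|s_{i-1}-2s_i+s_{i+1}|$ and $\varepsilon^2|s''_{i-1}+qs''_i+s''_{i+1}|$ give the $4\gamma+q+2$ constant. The Taylor machinery of Lemmas~\ref{lema41}--\ref{lema45} is used only for the regular component $r$, where $|r^{(j)}|\leqslant C$ together with \eqref{korak1}--\eqref{korak2} immediately yields the $C/N^2$ contribution.
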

\begin{proof} For $F_iy,\:i=1,2,\ldots,N/2$ holds
\begin{equation}
   \begin{split} 
    \left| F_iy\right|=&\left|\tfrac{\gamma}{\triangle d_i+\triangle d_{i+1}}\left\{\left[(q+1)a_i+d_i+\triangle d_{i+1} \right](y_{i-1}-y_i)\right.\right.\\
                               &\hspace{.2cm}
                                -\left[(q+1)a_{i+1}+d_{i+1}+\triangle d_{i} \right](y_{i}-y_{i+1})\\
                               &\hspace{.5cm}-\left.\tfrac{f(x_{i-1},y_{i-1})+qf(x_{i},y_{i})+f(x_{i+1},y_{i+1}) }{\gamma}
                                \left.\left(\triangle d_{i}+\triangle d_{i+1}\right)\right\}\right|\\
                      =&\left|\tfrac{\gamma}{\triangle d_i+\triangle d_{i+1}}\left\{\left[(q+1)a_i+d_i+\triangle d_{i+1} \right](y_{i-1}-y_i)\right.\right.\\
                                &\hspace{.2cm}  -\left[(q+1)a_{i+1}+d_{i+1}+\triangle d_{i} \right](y_{i}-y_{i+1}) \\
                               &\hspace{.5cm}-\left.\varepsilon^2\tfrac{y''_{i-1}+qy''_{i}+y''_{i+1} }{\gamma}
                                \left.\left(\triangle d_{i}+\triangle d_{i+1}\right)\right\}\right|\\          
               \leqslant&\gamma(q+2)\left|\tfrac{a_i(s_{i-1}-s_i)-a_{i+1}(s_i-s_{i+1})}{\triangle d_i+\triangle d_{i+1}} \right|
                          +\gamma\left|s_{i-1}-2s_i+s_{i+1}\right|+\varepsilon^2\left|s''_{i-1}+qs''_i+s''_{i+1}\right|\\
                      &\hspace{0.2cm}+\left|\tfrac{\gamma}{\triangle d_i+\triangle d_{i+1}}\left\{\left[(q+1)a_i+d_i+\triangle d_{i+1} \right](r_{i-1}-r_i)\right.\right.\\
                               &\hspace{.4cm} -\left[(q+1)a_{i+1}+d_{i+1}+\triangle d_{i} \right](r_{i}-r_{i+1}) 
                                -\left.\left.\varepsilon^2\tfrac{r''_{i-1}+qr''_{i}+r''_{i+1} }{\gamma}
                                \left(\triangle d_{i}+\triangle d_{i+1}\right)\right\}\right|  
\end{split}
 \label{konz2}
\end{equation}
For the layer component $s,$ due to  \eqref{slojna}, we have  

\begin{align*}\allowdisplaybreaks
  &\gamma(\tilde{q}+2)\left|\frac{a_i(s_{i-1}-s_i)-a_{i+1}(s_i-s_{i+1})}{\Delta d_i+\Delta d_{i+1}} \right|\\
  &\hspace{2cm} +\gamma\left|s_{i-1}-2s_i+s_{i+1}\right| +\varepsilon^2\left|s''_{i-1}+\tilde{q}s''_i+s''_{i+1}\right|\\
  & \hspace{.25cm} =\gamma (\tilde{q}+2)\left|\frac{\frac{1}{\sinh(\beta h_{i-1})}}{\frac{\cosh(\beta h_{i-1})-1}{\sinh(\beta h_{i-1})}
   +\frac{\cosh(\beta h_{i})-1}{\sinh(\beta h_{i})}} (s_{i-1}-s_i)\right.\\
   &\hspace{2cm}   
    \left.   -\frac{\frac{1}{\sinh(\beta h_{i})}}{\frac{\cosh(\beta h_{i-1})-1}{\sinh(\beta h_{i-1})}
                              +\frac{\cosh(\beta h_{i})-1}{\sinh(\beta h_{i})}} (s_{i}-s_{i+1})\right|   \\
    & \hspace{3cm}+\gamma\left|s_{i-1}-2s_i+s_{i+1}\right|+\varepsilon^2\left|s''_{i-1}+\tilde{q}s''_i+s''_{i+1}\right|    \\
    & \hspace{.25cm}   \leqslant \gamma(\tilde{q}+2)\left(\left| \frac{s_{i-1}-s_i}{\cosh(\beta h_{i-1})-1}\right|
                    +\left|\frac{s_i-s_{i+1}}{\cosh(\beta h_i)-1}\right|\right)\\
    &\hspace{3cm}                 + \gamma\left|s_{i-1}-2s_i+s_{i+1}\right|+\varepsilon^2\left|s''_{i-1}+\tilde{q}s''_i+s''_{i+1}\right|       \\
    &\hspace{.25cm}          \leqslant  C e^{-\frac{x_{i-1}}{\varepsilon}\sqrt{m}}\left( \frac{\varepsilon^2}{h^2_{i-1}}+\frac{\varepsilon^2}{h^2_i}+4\gamma+\tilde{q}+2\right)\stepcounter{equation}\tag{\theequation}\label{konz3} .
\end{align*}
Now, for the regular component $r,$ due to mesh sizes \eqref{korak1}, \eqref{korak2}, the estimate \eqref{regularna}, expansions \eqref{bah1a}, \eqref{bah1b} \eqref{bah2}, \eqref{bah7}, and Lemma \ref{lema41}-- Lemma \ref{lema45}, we have

\begin{multline}
   \left|\tfrac{\gamma}{\triangle d_i+\triangle d_{i+1}}\left\{\left[(q+1)a_i+d_i+\triangle d_{i+1} \right](r_{i-1}-r_i)\right.\right.\\
                                -\left.\left.\left[(q+1)a_{i+1}+d_{i+1}+\triangle d_{i} \right](r_{i}-r_{i+1}) 
                               - \varepsilon^2\tfrac{r_{i-1}+qr_{i}+r_{i+1} }{\gamma}
                                \left(\triangle d_{i}+\triangle d_{i+1}\right)\right\}\right| \leqslant \frac{C}{N^2}.
   \label{konz4}
\end{multline}

Using \eqref{konz3} and \eqref{konz4} completes the proof of the lemma.
\end{proof}

Now we can state and prove the main theorem on $\varepsilon$--uniform convergence.

\begin{theorem}\label{teoremabah1} The discrete problem \eqref{diskretni2} on the Bakhvalov--type mesh from Section 3  is uniformly convergent with respect to $\varepsilon$ and
\begin{equation*}
\max_{0\leqslant i\leqslant N}\left|y(x_i)-\overline{y}_i\right|\leq \frac{C}{N^2},
\end{equation*}
where $y$ is a solution of the problem \eqref{uvod1}--\eqref{uvod3},  $\overline{y}$ is the corresponding solution of \eqref{diskretni2} and $C>0$ is a constant independent  of  $N$ and $\varepsilon$.
\end{theorem}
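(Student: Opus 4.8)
The plan is to reduce the error bound to a consistency estimate and then to bound $F_iy$ separately on the two parts of the mesh defined by \eqref{bakh11}. Applying the stability inequality \eqref{stabilnost} of Theorem~\ref{teorema21} with $w$ the nodal restriction of the exact solution $y$ and $v=\overline y$, and using $F\overline y=0$, gives $\left\|y-\overline y\right\|_{\infty}\leqslant\tfrac{1}{m}\left\|Fy\right\|_{\infty}$, so it suffices to prove $\left\|Fy\right\|_{\infty}=\max_{1\leqslant i\leqslant N-1}\left|F_iy\right|\leqslant CN^{-2}$ (note $F_0y=y(0)=0$ and $F_Ny=y(1)=0$). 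The generating function \eqref{bakh11} and the layer estimate \eqref{slojna} are symmetric about $x=\tfrac{1}{2}$: on $[0,\tfrac{1}{2}]$ one has $e^{-x\sqrt{m}/\varepsilon}\geqslant e^{-(1-x)\sqrt{m}/\varepsilon}$ and $h_{i-1}\leqslant h_i$, and on $[\tfrac{1}{2},1]$ the reverse holds. Hence I would estimate $\left|F_iy\right|$ only for $1\leqslant i\leqslant N/2$, discarding the subdominant exponential and using $h_{i-1}\leqslant h_i$; the remaining indices $N/2\leqslant i\leqslant N-1$ are then treated by the analogous argument, with $e^{-(1-x)\sqrt{m}/\varepsilon}$ in place of $e^{-x\sqrt{m}/\varepsilon}$ and $h_{i-1}\geqslant h_i$.

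For $1\leqslant i\leqslant N/2$ I would substitute the decomposition $y=r+s$ of Theorem~\ref{teorema2} and apply the lemma that yields \eqref{konz1}. That lemma already absorbs the regular component $r$ into an $O(N^{-2})$ term, using \eqref{regularna}, the mesh estimates \eqref{korak1}--\eqref{korak2}, the rewriting \eqref{bah1a}--\eqref{bah7} of $F_iy$, and the auxiliary Lemmas~\ref{lema41}--\ref{lema45}. What remains on the right-hand side of \eqref{konz1} is the layer contribution, carrying the weight $\left|s_{i-1}\right|\leqslant C\,e^{-x_{i-1}\sqrt{m}/\varepsilon}$ from \eqref{slojna}; thus the problem reduces to showing that this layer contribution is $O(N^{-2})$, uniformly in $i$ and in $\varepsilon\in(0,\varepsilon^{\star}]$.

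I would establish this by splitting $\{1,\dots,N/2\}$ according to which branch of \eqref{bakh11} produces $x_i$. On the layer-resolving branch $t_i\in[0,\alpha]$, where $x_i=\kappa(t_i)=\tfrac{a\varepsilon t_i}{p-t_i}$, I would insert the explicit identities $x_i/\varepsilon=\tfrac{a t_i}{p-t_i}$ and $h_i=\tfrac{a\varepsilon p}{N(p-t_i)(p-t_{i+1})}$, which turn the layer contribution into an elementary function of $t_i$ and $N$; what makes it $O(N^{-2})$ is the characteristic Bakhvalov estimate for $\kappa$, of the form $\kappa'(t)\,e^{-\sigma\kappa(t)/\varepsilon}\leqslant C\varepsilon$ on $[0,\alpha]$ for a suitable fixed $\sigma>0$, which locks $h_i$, $\varepsilon$ and the local size $e^{-x_{i-1}\sqrt{m}/\varepsilon}$ of $s$ together, used jointly with the monotonicity of $s$ on $[0,\tfrac{1}{2}]$. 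On the smooth branch $t_i\in[\alpha,\tfrac{1}{2}]$ I would instead use $x_{i-1}\geqslant\kappa(\alpha)=a\varepsilon^{2/3}(p-\varepsilon^{1/3})\geqslant c\,\varepsilon^{2/3}$, where $c>0$ is independent of $\varepsilon$ since $p-\varepsilon^{1/3}\geqslant p-(\varepsilon^{\star})^{1/3}>0$; then $e^{-x_{i-1}\sqrt{m}/\varepsilon}\leqslant e^{-c\sqrt{m}\,\varepsilon^{-1/3}}$ decays faster than any power of $\varepsilon$, and combined with $h_i\leqslant CN^{-1}$ and \eqref{korak2} this again gives $O(N^{-2})$. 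Collecting the two branches with the $N^{-2}$ already present in \eqref{konz1}, and adding the analogous estimate for $N/2\leqslant i\leqslant N-1$, yields $\left\|Fy\right\|_{\infty}\leqslant CN^{-2}$, and the stability inequality then completes the proof.

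I expect the main obstacle to be the layer-resolving branch: over the few cells nearest $x=0$ both $h_i$ and $e^{-x_{i-1}\sqrt{m}/\varepsilon}$ vary by several orders of magnitude, so the estimate cannot come from crude bounds and must exploit the Green's-function structure of the scheme, which is exactly what Lemmas~\ref{lema41}--\ref{lema45} and \eqref{konz1} are designed to supply. A secondary technical point is the bookkeeping on the smooth branch, where one must check, uniformly for $\varepsilon\in(0,\varepsilon^{\star}]$, that the exponentially small factor $e^{-c\sqrt{m}\,\varepsilon^{-1/3}}$ dominates the $\varepsilon$- and $h_i$-dependent weights occurring in \eqref{konz1}, so that the smooth-branch contribution is $N$-independent apart from the $N^{-2}$ factor.
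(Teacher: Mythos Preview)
Your overall architecture is right: reduce to $\left\|Fy\right\|_\infty$ via \eqref{stabilnost}, exploit the symmetry about $x=\tfrac12$, split $y=r+s$, and absorb the regular part into $O(N^{-2})$ via \eqref{konz4}. The problem is that you then commit to the \emph{crude} bound \eqref{konz1} for the layer part on the entire half-mesh, and that bound is not strong enough.

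Look at the first few cells. For $t_{i-1}$ near $0$ the Bakhvalov map gives $h_{i-1}\approx a\varepsilon p/(Np^2)=C\varepsilon/N$, so $\varepsilon^2/h_{i-1}^2\approx CN^2$, while $|s_{i-1}|\leqslant Ce^{-x_{i-1}\sqrt m/\varepsilon}$ is only $O(1)$ there. Hence the layer term in \eqref{konz1} is $O(N^2)$, not $O(N^{-2})$. The Bakhvalov identity you invoke, $\kappa'(t)e^{-\sigma\kappa(t)/\varepsilon}\leqslant C\varepsilon$, controls products of the type $h_i\,|s_i|$ (and more generally $h_i^j|s^{(j)}|$), but \eqref{konz1} carries the \emph{inverse} weight $\varepsilon^2/h_i^2$, so that identity does not rescue it. The same defect hits your smooth-branch argument: for fixed $\varepsilon$ and $N\to\infty$ the factor $|s_{i-1}|\,\varepsilon^2/h_i^2$ behaves like $e^{-c\varepsilon^{-1/3}}\varepsilon^{4/3}N^2$, which is not uniformly $\leqslant CN^{-2}$.

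What the paper actually does is use Lemmas~\ref{lema41}--\ref{lema45} to produce the \emph{refined} consistency bound \eqref{teo3}, whose terms have the form $|s^{(j)}|\,h_i^a(h_i-h_{i-1})^b$ with $a+2b\geqslant2$, and then splits into three (not two) regimes. In Case~I ($t_{i-1}\geqslant\alpha$) the bound \eqref{teo3} is combined with $h_i\leqslant CN^{-1}$, $|h_i-h_{i-1}|\leqslant CN^{-2}$, and $e^{-x_{i-1}\sqrt m/\varepsilon}\leqslant e^{-C\varepsilon^{-1/3}}$; the exponential kills the negative powers of $\varepsilon$ coming from $|s^{(j)}|$, and the explicit $N^{-2}$ is already present. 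In Case~II ($t_{i-1}<\alpha$ and $t_{i-1}\leqslant p-3/N$) one inserts the sharp step bounds $h_i\leqslant C\varepsilon N^{-1}(p-t_{i-1})^{-2}$ and $h_i-h_{i-1}\leqslant C\varepsilon N^{-2}(p-t_{i-1})^{-3}$ together with $e^{-x_{i-1}\sqrt m/\varepsilon}\leqslant Ce^{-a\sqrt m p/(p-t_{i-1})}$; this is where your Bakhvalov-type inequality is genuinely used, but on \eqref{teo3}, not on \eqref{konz1}. Only in the thin transition zone Case~III ($p-3/N<t_{i-1}<\alpha$) does the paper fall back on \eqref{konz1}, and there it works because $h_{i-1}\geqslant C\varepsilon N$ forces $\varepsilon^2/h_{i-1}^2\leqslant CN^{-2}$ while simultaneously $e^{-x_{i-1}\sqrt m/\varepsilon}\leqslant Ce^{-CN}$.

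In short: keep your outer structure, but replace the blanket use of \eqref{konz1} by the Taylor-based estimate \eqref{teo3} on the bulk of the layer and smooth branches, and reserve \eqref{konz1} for the narrow zone where $h_{i-1}$ is provably large relative to $\varepsilon$.
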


\begin{proof}   From $(\ref{bah1a}),$ due to  Lemma \ref{lema41}, Lemma \ref{lema42}, Lemma \ref{lema43}, Lemma \ref{lema44} and  Lemma \ref{lema45} we have   
  \begin{multline}
    \left| G_iy\right|  
       \leqslant C\left( \left|y'_i\right|(h_i-h_{i-1})
                   +\tfrac{\left|y''(\delta^{-}_{i-1})\right|}{2}h^2_{i-1}+ \tfrac{\left|y''(\delta^{+}_{i})\right|}{2}h^2_{i}
                      +\left| y''_i\right|h^2_i\right.\\
            +\left|y'''_i\right|\left(\varepsilon^2+h^2_{i-1}\right)(h_i-h_{i-1})  
                      +\varepsilon^2\left|y^{(iv)}(\zeta^{-}_{i-1})+y^{(iv)}(\mu^{-}_{i-1})\right|h^2_{i-1}\\
                              +\left.\varepsilon^2\left|y^{(iv)}(\zeta^{+}_i)+y^{(iv)}(\mu^{+}_i)\right|h^2_{i} \right),\
                               i=1,2,\ldots N/2.
  \label{teo3}
  \end{multline}
The statement of the theorem for the regular component due to  \eqref{konz4} is proved. 

For the layer component $s,$ holds the estimate  \eqref{slojna}.  On the observed part of mesh, we have that $e^{-\frac{x}{\varepsilon}\sqrt{m}}\geqslant e^{-\frac{1-x}{\varepsilon}\sqrt{m}},$ now it is enough to estimate   $e^{-\frac{x}{\varepsilon}\sqrt{m}}$, on this part of mesh.  

We use a technique from \cite{bahvalov1969} and \cite{herceg1990,vulanovic1983,vulanovic1993}.\\

\textbf{Case I}\\ Let   $t_{i-1}\geqslant \tau,$ 
 we have that 

\begin{equation}
   \begin{split} 
    e^{-\frac{x_{i-1}}{\varepsilon}\sqrt{m}}\leqslant & e^{-\frac{\kappa(\tau)}{\varepsilon}\sqrt{m}}
                                         =e^{-a\sqrt{m}\frac{p-\sqrt[3]{\varepsilon}}{\sqrt[3]{\varepsilon}}}
                                         \leqslant e^{-\frac{C}{\sqrt[3]{\varepsilon}}}.
   \end{split}
 \label{lema7a}
\end{equation}
Now for the layer component $s$ due to \eqref{teo3}, holds
  \begin{equation}
    \begin{split} 
     & \left|s'_i\right|(h_i-h_{i-1})
                   +\tfrac{\left|s''(\delta^{-}_{i-1})\right|}{2}h^2_{i-1}+ \tfrac{\left|s''(\delta^{+}_{i})\right|}{2}h^2_{i}
                      +\left| s''_i\right|h^2_i\\
     &\hspace{2cm} +\left|s'''_i\right|\left(\varepsilon^2+h^2_{i-1}\right)(h_i-h_{i-1})
                 +\varepsilon^2\left|s^{(iv)}(\zeta^{-}_{i-1})+s^{(iv)}(\mu^{-}_{i-1})\right|h^2_{i-1}\\
     &\hspace{8cm}                         +\varepsilon^2\left|s^{(iv)}(\zeta^{+}_i)+s^{(iv)}(\mu^{+}_i)\right|h^2_{i}  \\
     &\hspace{.25cm}\leqslant C_1\frac{e^{-\frac{C}{\sqrt[3]{\varepsilon}}}}{\varepsilon}(h_i-h_{i-1})
                   +C_2\frac{e^{-\frac{C}{\sqrt[3]{\varepsilon}}}}{\varepsilon^2}h^2_i\\
     &\hspace{2.5cm}    +C_3\frac{e^{-\frac{C}{\sqrt[3]{\varepsilon}}}}{\varepsilon^3}\left(\varepsilon^2+h^2_{i-1}\right)(h_i-h_{i-1})
                   \leqslant \frac{C}{N^2}.
    \end{split}
   \label{teo3a}   
  \end{equation} 
\textbf{Case II}\\
Let  $t_{i-1}<\tau$ i $t_{i-1}\leqslant p-3h,\:h=1/N.$ 
From $t_{i-1}\leqslant p-3h\Leftrightarrow p-t_{i-1}\geqslant 3h\Leftrightarrow p-t_{i+1}\geqslant h$ and $p-t_{i-1}=p-t_{i+1}+2h,$  we have that   
\begin{equation}
  p-t_{i+1}\geqslant \frac{p-t_{i-1}}{3}.
\label{lema9}
\end{equation}
Also, there holds the following estimate
\begin{equation}
  \begin{split} 
   e^{-\frac{x_{i-1}}{\varepsilon}\sqrt{m}}=e^{-\frac{a\sqrt{m}t_{i-1}}{p-t_{i-1}}}\leqslant C e^{-\frac{a\sqrt{m}p}{p-t_{i-1}}}.
  \end{split}
 \label{lema9a}
\end{equation}
From the construction of the mesh \eqref{bakh11}, it implies
\begin{equation}
  \kappa^{(k)}(\tau)=\pi^{(k)}(\tau),\:k\in\left\{0,1,2\right\}
 \label{lema10}
\end{equation}  
and  
\begin{equation}
  \begin{split} 
   \kappa'''(t)-\pi'''(t)=&\frac{6a\varepsilon p}{(p-t)^4}-6\omega\geqslant \frac{6a\varepsilon p}{(p-\tau)^4}-6\omega
                         =  6\left( \frac{a p}{\sqrt[3]{\varepsilon}}-\omega\right),\:t\in\left[ \tau,p\right).
  \end{split}
\label{lema11}
\end{equation}
Hence, for sufficiently small $\varepsilon$, we have  
\begin{subequations}
\begin{equation}
   \frac{a p}{\sqrt[3]{\varepsilon}}-\omega>0,
  \label{lema12}
\end{equation} 
\begin{equation}
  \kappa'''(t)-\pi'''(t)>0,\:t\in\left[\tau,p\right),
  \label{lema13}
\end{equation}
and, due to \eqref{lema10} and  \eqref{lema13}, we get  
\begin{equation}
  \kappa^{(k)}(t)>\pi^{(k)}(t),\:k\in\left\{0,1,2\right\},\:t\in\left[\tau,p\right).
  \label{lema14}
\end{equation}
\end{subequations}

Now, because of \eqref{bakh11}, \eqref{lema9} and \eqref{lema14} we have  
\begin{subequations}
  \begin{equation}
     h_i\leqslant\int_{i/N}^{(i+1)/N}{\kappa'(t)\dif t}\leqslant\frac{\kappa'(t_{i+1})}{N}
          =\frac{a\varepsilon p}{(p-t_{i+1})^2}\cdot \frac{1}{N} \leqslant\frac{9a\varepsilon p}{(p-t_{i-1})^2}\cdot \frac{1}{N}
    \label{lema15}
  \end{equation}
and
  \begin{multline}
    h_i-h_{i-1}\leqslant\int_{i/N}^{(i+1)/N}\int_{t-1/N}^{t}{\kappa''(s)\dif s\dif t}\\\leqslant\frac{\kappa''(t_{i+1})}{N^2}
          =\frac{2 a\varepsilon p}{(p-t_{i+1})^3}\cdot\frac{1}{N^2}\leqslant\frac{54a\varepsilon p}{(p-t_{i-1})^3}\cdot\frac{1}{N^2} .               
    \label{lema16}
  \end{multline}
\end{subequations}
Finally, for the layer component $s$ from \eqref{teo3}, due to \eqref{lema9a}, \eqref{lema15} and \eqref{lema16}, we obtain the estimate

\begin{align}
  & \left|s'_i\right|(h_i-h_{i-1})
                   +\tfrac{\left|s''(\delta^{-}_{i-1})\right|}{2}h^2_{i-1}+ \tfrac{\left|s''(\delta^{+}_{i})\right|}{2}h^2_{i}
                      +\left| s''_i\right|h^2_i\nonumber\\
  &\hspace{1.5cm}+\left|s'''_i\right|\left(\varepsilon^2+h^2_{i-1}\right)(h_i-h_{i-1}) 
                      +\varepsilon^2\left|s^{(iv)}(\zeta^{-}_{i-1})+s^{(iv)}(\mu^{-}_{i-1})\right|h^2_{i-1}\nonumber\\
  &\hspace{7.5cm}    +\varepsilon^2\left|s^{(iv)}(\zeta^{+}_i)+s^{(iv)}(\mu^{+}_i)\right|h^2_{i}  \nonumber \\
  &\hspace{.25cm}\leqslant  C_1 \frac{e^{-\frac{x_{i-1}}{\varepsilon}\sqrt{m}}}{\varepsilon}(h_i-h_{i-1})+C_2 \frac{e^{-\frac{x_{i-1}}{\varepsilon}\sqrt{m}}}{\varepsilon^2}h^2_{i}\nonumber\\
  &\hspace{3.25cm}       + C_3\frac{e^{-\frac{x_{i-1}}{\varepsilon}\sqrt{m}}}{\varepsilon^3}\left(\varepsilon^2+h^2_{i-1}\right)(h_i-h_{i-1}) 
                        +C_4 \varepsilon^2 \frac{e^{-\frac{x_{i-1}}{\varepsilon}\sqrt{m}}}{\varepsilon^4}h^2_{i} \nonumber\\
  &\hspace{.25cm}\leqslant C_5\left[\frac{e^{-\frac{a\sqrt{m}p}{p-t_{i-1}}}}{\varepsilon}\cdot \frac{54a\varepsilon p}{(p-t_{i-1})^3}\cdot\frac{1}{N^2}
                  +2\frac{e^{-\frac{a\sqrt{m}p}{p-t_{i-1}}}}{\varepsilon^2}\cdot \frac{81a^2\varepsilon^2 p^2}{(p-t_{i-1})^4}\cdot \frac{1}{N^2}  \right.\nonumber\\
  &\hspace{1.5cm}+\frac{e^{-\frac{a\sqrt{m}p}{p-t_{i-1}}}}{\varepsilon^3}\cdot\left(\varepsilon^2
                         +  \frac{81a^2\varepsilon^2 p^2}{(p-t_{i-1})^4}\cdot\frac{1}{N^2}\right)                                   
                 \cdot \frac{54a\varepsilon p}{(p-t_{i-1})^3}\cdot\frac{1}{N^2} \nonumber\\
  &\hspace{7cm}    \left.             +4\varepsilon^2\frac{e^{-\frac{a\sqrt{m}p}{p-t_{i-1}}}}{\varepsilon^4}\cdot\frac{81a^2\varepsilon^2 p^2}{(p-t_{i-1})^4}                 
                           \cdot\frac{1}{N^2}\right] \nonumber \\
  &\hspace{.25cm}\leqslant C_6\left[ \frac{e^{-\frac{a\sqrt{m}p}{p-t_{i-1}}}}{(p-t_{i-1})^3}
                   +\frac{e^{-\frac{a\sqrt{m}p}{p-t_{i-1}}}}{(p-t_{i-1})^4}
                    +\frac{e^{-\frac{a\sqrt{m}p}{p-t_{i-1}}}}{(p-t_{i-1})^7}\cdot\frac{1}{N^2} \right]\cdot\frac{1}{N^2}                     \nonumber    \\
  &\hspace{.25cm}\leqslant\frac{C}{N^2}                 .
\label{lema17}
\end{align}

\noindent \textbf{Case III}\\
At the end, let $p-3h<t_{i-1}<\tau,$ there holds 

\begin{equation}
  \begin{split} 
     e^{-\frac{x_{i-1}}{\varepsilon}\sqrt{m}}=  e^{-\frac{a\sqrt{m}(p-3h)}{3h}}\leqslant C e^{-\frac{a\sqrt{m}p}{3h}}.
  \end{split}
\label{lema20}
\end{equation}
On the observed part of the mesh is $h_{i-1}\leqslant h_i,$ and for the mesh sizes $h_{i-1},$ we have  

\begin{multline}
    h_{i-1}=\kappa(t_i)-\kappa(t_{i-1})=\kappa'(\theta_{i-1})h
          =\frac{a\varepsilon ph}{(p-\theta_{i-1})^2}\\\geqslant\frac{a\varepsilon ph}{(p-(p-3h))^2}=\frac{a\varepsilon p}{9h},\:
          \theta_{i-1}\in[t_{i-1},t_i].
 \label{lema20a}
\end{multline}
Now, due to \eqref{lema20}, \eqref{lema20a} and  \eqref{konz1} we get

\begin{multline}
   \left| G_iy\right|\leqslant C_1\left[e^{-\frac{x_{i-1}}{\varepsilon}\sqrt{m}}\left( \tfrac{\varepsilon^2}{h^2_{i-1}}
                                  +\tfrac{\varepsilon^2}{h^2_i}+4\gamma+\tilde{q}+2\right)+\frac{1}{N^2} \right]  \\
    	             \leqslant C_2 \left[e^{-\frac{x_{i-1}}{\varepsilon}\sqrt{m}}\left( 2h^2 +4\gamma+\tilde{q}+2\right)+\frac{1}{N^2} \right]
                     \leqslant \frac{C}{N^2}.
\label{lema21}
\end{multline}

Case $3h\leqslant\sqrt[3]{\varepsilon}$ is proved in Case I and  Case II.

According to  \eqref{konz4}, \eqref{teo3}, \eqref{teo3a}, \eqref{lema17} and  \eqref{lema21}, the proof of the theorem is complete.
\end{proof}

\section{Numerical results}
 \label{sekcija5}
 \setcounter{theorem}{0}

In this section we present the numerical results to confirm the uniform accuracy of the discrete problem \eqref{diskretni1} using the mesh from Section \ref{sekcija3}. To demonstrate the efficiency of the method, we present two examples having boundary layers.

\begin{example} Consider the following problem from {\rm \cite{vulanovic1993}}
\begin{subequations}
 \begin{equation}
    \varepsilon^2y''=y-1\:\text{ for }x\in[0,1],
  \label{num1}
 \end{equation}
\begin{equation}
   y(0)=y(1)=0.
  \label{num2}
\end{equation}
\end{subequations}

The exact solution of this problem $\eqref{num1}-\eqref{num2}$ is given by
\begin{equation}
    y(x)=1-\frac{e^{-\frac{x}{\varepsilon}}+e^{-\frac{1-x}{\varepsilon}}}{1+e^{-\frac{1}{\varepsilon}}}.
  \label{num3}
\end{equation}
\end{example}

The appropriate system was solved using initial guess\\ $y0=\left(0,y0_1,\ldots,y0_{N-1},0\right)^T,\:y0_i=1,\:i=1,\ldots,N-1,$ by Newton's method. 
The value of the constant $\gamma=1$ has been chosen  so that the condition $\gamma\geqslant f_y(x,y),\:\forall(x,y)\in[0,1]\times[y_L,y_U]\subset[0,1]\times\mathbb{R}$ is fulfilled, where $y_L$ and $y_U$ are the lower and upper solutions of the test problem \eqref{num1}--\eqref{num2} and their values are $y_L=0$ and $y_U=1.$  

Because of the fact that the exact solution is known, we define the computed error $E_n$ and the computed rate of convergence Ord in the usual way
\begin{equation}
  E_N=\left\|y-\overline{y}^N\right\|_{\infty}
 \label{num4}
\end{equation}
and
\begin{equation}
    \ord=\frac{\ln E_N-\ln E_{2N}}{\ln 2}.
 \label{num5}
\end{equation}
Other values of constants are $m=1,\:q=4,\:a=1$ and $p=0.4.$ \\
The values of $E_N$ and Ord are given at the of the paper in Appendix (Table \ref{tabela1}). 

\example\upshape Consider the following problem 
\begin{subequations}
\begin{equation}
    \varepsilon^2y''=(y-1)(1+(y-1)^2)\:\text{ for } x\in[0,1],
 \label{num1a}
\end{equation}
\begin{equation}
 y(0)=y(1)=0,
 \label{num2a}
\end{equation}
\end{subequations}

The exact solution of the problem $\eqref{num1a}-\eqref{num2a}$ is unknown.

The system of nonlinear equations is solved by Newton's method with initial guess $y0=\left(0,y0_1,\ldots,y0_{N-1},0\right)^T,\:y0_i=1,\:i=1,\ldots,N-1.$ 

The value of the constant $\gamma$ has been chosen so that local version of the condition $\eqref{uvod3}$ is fulfilled.
In other words, because  $f_y(x,y)\geqslant m>0,\forall (x,y)\in[0,1]\times[y_L,y_U],$  and in our example $y_L=0,\:y_U=1$, and $1\leqslant f_y(x,y)\leqslant 4,\:\forall (x,y)\in[0,1]\times[y_L,y_U],$  we get that  $\gamma=4.$

Because the fact that we do not know the exact solution, we will replace $y$ by $\hat{y}$ in order to calculate $E_N$ and Ord,  where $\hat{y}$ is the numerical solution of $\eqref{num1a}-\eqref{num2a}$  obtained by using $N=16384,$ (same procedure is applied in \cite{herceg1998}, \cite{herceg2000themesh}).

Now, we calculate the value of error $E_N$ and the the rate of convergence on the following way
\begin{equation}
  E_N=\left\|\hat{y}-\overline{y}^N\right\|_{\infty}
 \label{num4a}
\end{equation}
and
\begin{equation}
    \ord=\frac{\ln E_N-\ln E_{2N}}{\ln 2}.
 \label{num5a}
\end{equation}

Other values of constants are $m=1,\:q=4,\:a=1$ and $p=0.3.$ \\
The values of $E_N$ and Ord are given at the of the paper in Appendix (Table \ref{tabela2}).

\section{Conclusion}
In this paper we presented a discretization of an one--dimensional semilinear reaction--diffusion problem, with suitable assumptions that have ensured the existence and uniqueness of the continuous problem. We constructed a class of differential schemes, and we proved the existence and uniqueness of the numerical solution, after which we proved the $\varepsilon$--uniform convergence using a suitable layer--adaptive mesh. Finally we performed a numerical experiments to confirm the theoretical results.\\\\

\noindent\textbf{Acknowledgement}\textit{.} This paper is the part of Project "Numeri\v cko rje\v savanje kvazilinearnog singularno--perturbacionog jednodimenzionalnog rubnog problema". The paper has emanated from research conducted with the partial financial support of  Ministry of education and sciences of Federation of Bosnia and Herzegovina and University of Tuzla under grant 01/2-3995-V/17 of 18.12.2017.  Helena Zarin is supported by the Ministry of Education and Science of the Republic of Serbia under grant no. 174030.  The authors are grateful to Sanela Halilovi\' c and Rifat Omerovi\' c for helpful advice.

\newpage
\section*{Appendix}
\begin{table}[h]
\centering
\begin{tabular}{c|cc|cc|cc|}\hline
     $N$ &$E_n$&Ord&$E_n$&Ord&$E_n$&Ord\\\hline     
$2^{6}$&$2.4133e-04$&$2.00$   &$1.0062e-03$&$1.98$    &$1.3294e-03$&$1.96$        \\      
$2^{7}$&$6.0436e-05$&$2.00$   &$2.5429e-04$&$1.99$    &$3.4128e-04$&$1.99$        \\
$2^{8}$&$1.5095e-05$&$2.00$   &$6.3802e-05$&$2.00$    &$8.5934e-05$&$2.00$      \\
$2^{9}$&$3.7691e-06$&$1.97$   &$1.5961e-05$&$2.00$    &$2.1523e-05$&$2.00$         \\
$2^{10}$&$9.6433e-07$&$2.00$  &$3.9909e-06$&$2.00$    &$5.3833e-06$&$2.00$       \\
$2^{11}$&$2.4108e-07$&$2.00$  &$9.9777e-07$&$2.00$    &$1.3460e-06$&$2.00$        \\
$2^{12}$&$6.0271e-08 $&$2.00$ &$2.4945e-07$&$2.00$    &$3.3651e-07$&$2.00$      \\
$2^{13}$&$1.5068e-08 $&$-$    &$6.2363e-08$&$-$       &$8.4127e-08$&$-$         \\\hline
 $\varepsilon$&\multicolumn{2}{c}{$2^{-3}$}&\multicolumn{2}{c}{$2^{-5}$}
            &\multicolumn{2}{c}{$2^{-10}$}\\\hline\hline
     $N$ &$E_n$&Ord&$E_n$&Ord&$E_n$&Ord\\\hline    
$2^{6}$  &$1.3243e-03$&$1.96$ &$1.3243e-03$&$1.96$ &$1.3243e-03$&$1.96$       \\      
$2^{7}$  &$3.3945e-04$&$1.99$ &$3.3945e-04$&$1.99$ &$3.3945e-04$&$1.99$       \\
$2^{8}$  &$8.5413e-05$&$2.00$ &$8.5413e-05$&$2.00$ &$8.5413e-05$&$2.00$        \\
$2^{9}$  &$2.1388e-05$&$2.00$ &$2.1388e-05$&$2.00$ &$2.1388e-05$&$2.00$         \\
$2^{10}$ &$5.3493e-06$&$2.00$ &$5.3493e-06$&$2.00$ &$5.3493e-06$&$2.00$       \\
$2^{11}$ &$1.3375e-06$&$2.00$ &$1.3375e-06$&$2.00$ &$1.3375e-06$&$2.00$        \\
$2^{12}$ &$3.3438e-07$&$2.00$ &$3.3438e-07$&$2.00$ &$3.3438e-07$&$2.00$       \\
$2^{13}$ &$8.3595e-08$&$-$    &$8.3595e-08$&$-$   &$8.3595e-08$&$-$         \\\hline\hline
 $\varepsilon$&\multicolumn{2}{c}{$2^{-15}$}
            &\multicolumn{2}{c}{$2^{-25}$}&\multicolumn{2}{c}{$2^{-30}$}\\\hline
            $N$ &$E_n$&Ord&$E_n$&Ord&$E_n$&Ord\\\hline 
$2^{6}$       &$1.3243e-03$&$1.96$  &$1.3243e-03$&$1.96$   &$1.3243e-03$&$1.96$   \\      
$2^{7}$       &$3.3945e-04$&$1.99$ &$3.3945e-04$&$1.99$    &$3.3945e-04$&$1.99$    \\
$2^{8}$       &$8.5413e-05$&$2.00$  &$8.5413e-05$&$2.00$    &$8.5413e-05$&$2.00$   \\
$2^{9}$      &$2.1388e-05$&$2.00$  &$2.1388e-05$&$2.00$    &$2.1388e-05$&$2.00$     \\
$2^{10}$     &$5.3493e-06$&$2.00$  &$5.3493e-06$&$2.00$   &$5.3493e-06$&$2.00$   \\
$2^{11}$      &$1.3375e-06$&$2.00$  &$1.3375e-06$&$2.00$   &$1.3375e-06$&$2.00$   \\
$2^{12}$     &$3.3438e-07$&$2.00$  &$3.3438e-07$&$2.00$   &$3.3438e-07$&$2.00$   \\
$2^{13}$         &$8.3595e-08$&$-$     &$8.3595e-08$&$-$       &$8.3595e-08$&$-$      \\\hline
 $\varepsilon$&\multicolumn{2}{c}{$2^{-35}$}
            &\multicolumn{2}{c}{$2^{-40}$}&\multicolumn{2}{c}{$2^{-45}$}\\\hline
\end{tabular}
\caption{Error $E_N$ and convergence rates Ord for approximate solution.}
\label{tabela1}
\end{table}
\newpage
\begin{table}[h]
\centering
\begin{tabular}{c|cc|cc|cc|cc|cc}\hline
     $N$ &$E_n$&Ord&$E_n$&Ord&$E_n$&Ord\\\hline     
$2^{6}$&$5.4721e-04$&$1.99$   &$2.4570e-03$&$1.96$    &$2.7149e-03$&$1.93$         \\      
$2^{7}$&$1.3773e-04$&$2.00$   &$6.3358e-04$&$1.99$    &$7.1008e-04$&$1.98$        \\
$2^{8}$&$3.4477e-05$&$2.00$   &$1.5962e-04$&$2.00$    &$1.7971e-04$&$2.00$        \\
$2^{9}$&$8.6159e-06$&$2.00$   &$3.9985e-05$&$2.00$    &$4.5044e-05$&$2.00$          \\
$2^{10}$&$2.1478e-06$&$2.02$  &$9.9654e-06$&$2.02$    &$1.1237e-05$&$2.02$        \\
$2^{11}$&$5.3066e-07$&$2.07$  &$2.4624e-06$&$2.07$    &$2.7767e-06$&$2.07$        \\
$2^{12}$&$1.2635e-07$&$2.07$  &$5.8629e-07 $&$2.07$    &$6.6115e-07$&$2.07$       \\
$2^{13}$&$3.0091e-08 $&$-$    &$1.3963e-07 $&$-$      &$1.5746e-07$&$-$              \\\hline
 $\varepsilon$&\multicolumn{2}{c}{$2^{-3}$}&\multicolumn{2}{c}{$2^{-5}$}
            &\multicolumn{2}{c}{$2^{-10}$}\\\hline\hline
    $N$ &$E_n$&Ord&$E_n$&Ord&$E_n$&Ord\\\hline     
$2^{6}$  &$1.2281e-03$&$1.80$ &$1.2276e-03$&$1.80$ &$1.2276e-03$&$1.80$        \\      
$2^{7}$  &$3.5293e-04$&$1.94$  &$3.5247e-04$&$1.94$ &$3.5247e-04$&$1.94$       \\
$2^{8}$  &$9.1947e-05$&$1.98$  &$9.1749e-05$&$1.98$ &$9.1749e-05$&$1.98$        \\
$2^{9}$  &$2.3235e-05$&$2.00$ &$2.3180e-05$&$2.00$ &$2.3180e-05$&$2.00$       \\
$2^{10}$ &$5.8267e-06$&$2.00$  &$5.8140e-06$&$2.00$ &$5.8140e-06$&$2.00$        \\
$2^{11}$ &$1.4577e-06$&$2.00$  &$1.4544e-06$&$2.00$ &$1.4544e-06$&$2.00$        \\
$2^{12}$ &$3.6449e-07$&$2.00$  &$3.6366e-07$&$2.00$ &$3.6366e-07$&$2.00$       \\
$2^{13}$ &$9.1127e-08$&$-$ &$9.0921e-08$&$-$    &$9.0921e-08$&$-$            \\\hline
 $\varepsilon$&\multicolumn{2}{c}{$2^{-15}$}
             &\multicolumn{2}{c}{$2^{-25}$}&\multicolumn{2}{c}{$2^{-30}$}\\\hline\hline
     $N$ &$E_n$&Ord&$E_n$&Ord&$E_n$&Ord\\\hline     
$2^{6}$       &$1.2276e-03$&$1.80$ &$1.2276e-03$&$1.80$    &$1.2276e-03 $&$1.80 $   \\      
$2^{7}$       &$3.5247e-04$&$1.94$ &$3.5247e-04$&$1.94$    &$3.5247e-04 $&$1.94 $   \\
$2^{8}$       &$9.1749e-05$&$1.98$ &$9.1749e-05$&$1.98$    &$9.1749e-05 $&$1.98 $   \\
$2^{9}$       &$2.3180e-05$&$2.00$ &$2.3180e-05$&$2.00$    &$2.3180e-05 $&$2.00 $   \\
$2^{10}$      &$5.8140e-06$&$2.00$ &$5.8140e-06$&$2.00$    &$5.8140e-06 $&$2.00 $   \\
$2^{11}$      &$1.4544e-06$&$2.00$ &$1.4544e-06$&$2.00$    &$1.4544e-06 $&$2.00 $   \\
$2^{12}$      &$3.6367e-07$&$2.00$ &$3.6367e-07$&$2.00$    &$3.6367e-07 $&$2.00 $   \\
$2^{13}$      &$9.0921e-08$&$-$    &$9.0921e-08$&$-$       &$9.0921e-08$&$-$      \\\hline
 $\varepsilon$&\multicolumn{2}{c}{$2^{-35}$}
             &\multicolumn{2}{c}{$2^{-40}$}&\multicolumn{2}{c}{$2^{-45}$}\\\hline             
\end{tabular}
\caption{Error $E_N$ and convergence rates Ord for approximate solution.}
\label{tabela2}
\end{table}

\noindent Samir Karasulji\' c,\\
Faculty of Natural Sciences and Mathematics, \\University of Tuzla, \\Univerzitetska ulica 4, 75000 Tuzla, Bosnia and Herzegovina \\
E-mail: {\texttt{ samir.karasuljic@untz.ba}
}
\\\\
\noindent Helena Zarin,\\
Department of Mathematics and Informatics, \\Faculty of Natural Sciences and Mathematics, \\University of Novi Sad,\\ Trg Dositeja Obradovi\' ca, 21000 Novi Sad, Serbia\\
E-mail: {\texttt{helena.zarin@dmi.uns.ac.rs}
}
\\\\
\noindent Enes Duvnjakovi\' c,\\
Department of Mathematics,\\ Faculty of Natural Sciences and Mathematics,\\ University of Tuzla, \\Univerzitetska ulica 4, 75000 Tuzla, Bosnia and Herzegovina\\
E-mail: {\texttt{enes.duvnjakovic@untz.ba}


\begin{thebibliography}{00}

   \bibitem{bahvalov1969} {\sc N.S. Bakhvalov }, Towards optimization of methods for solving boundary value problems in the presence of boundary layers,  
                          \emph{ Zh Vychisl Mat  Mat Fiz}, {\bf 9} (1969),  841--859.
                          
   \bibitem{boglaev1984approximate} {\sc I. P. Boglaev}, Approximate solution of a non-linear boundary value problem with a small parameter for the highest-order differential, 
                              \emph {Zh Vychisl Mat  Mat Fiz}, {\bf 24}(11), (1984), 1649--1656.
                              
   \bibitem{samir2010scheme} {\sc E. Duvnjakovi\'{c}, S. Karasulji\'{c}, N. Oki\v{c}i\'{c}}, Difference Scheme for Semilinear Reaction-Diffusion Problem. 
                          \emph{ In: 14th International Research/Expert Conference "Trends in the Development of Machinery and Associated Technology"
                          TMT 2010;  11-18 September 2010; Mediterranean Cruise. Zenica: University of Zenica,} (2010), 793--796.
                                   
                                   
   \bibitem{samir2011scheme} {\sc E. Duvnjakovi\'c, S. Karasulji\'c}, Difference Scheme for Semilinear Reaction-Diffusion Problem on a Mesh of Bakhvalov Type,  
                               \emph{Math Balkanica}, {\bf 25}(5), (2011), 499--504.
                               
   \bibitem{samir2011skoplje} {\sc E. Duvnjakovi\'{c},  S. Karasulji\'{c}}, Uniformly Convergente Difference Scheme for Semilinear Reaction-Diffusion Problem,
                              \emph{ In: SEE Doctoral Year Evaluation Workshop, Skopje, Macedonia}, (2011).
                              
   \bibitem{samir2012uniformnly} {\sc E. Duvnjakovi\'{c}, S. Karasulji\'{c}}, Difference Scheme for Semilinear Reaction-Diffusion Problem, 
                             \emph{The Seventh Bosnian-Herzegovinian Mathematical Conference, Sarajevo, BiH}, (2012).
 
  \bibitem{samir2012class} {\sc E. Duvnjakovi\'{c}, S. Karasulji\'{c}}, Class of Difference Scheme for Semilinear Reaction-Diffusion Problem on Shishkin Mesh,
                           \emph{MASSEE International Congress on Mathematics - MICOM 2012, Sarajevo, Bosnia and Herzegovina}, (2012).
                           
  \bibitem{samir2013collocation} {\sc E. Duvnjakovi\'{c}, S.  Karasulji\'{c}}, Collocation Spline Methods for Semilinear Reaction-Diffusion Problem on Shishkin Mesh,
                         \emph{IECMSA-2013, Second International Eurasian Conference on Mathematical Sciences and Applications, Sarajevo, Bosnia and Herzegovina}, (2013).
                         
  
   \bibitem{samir2015uniformlyconvergent} {\sc E. Duvnjakovi{\'c}, S. Karasulji{\'c}, V. Pa{\v s}i{\'c}, H. Zarin}, 
                 A uniformly convergent difference scheme on a modified Shishkin mesh for the singularly perturbed reaction-diffusion boundary value problem,  
                 \emph{J Mod Meth Numer Math}, {\bf 6}(1),  (2015), 28--43.                            
  
   \bibitem{herceg1990} {\sc D. Herceg}, Uniform fourth order difference scheme for a singular perturbation problem, \emph{ Numer Math}, {\bf 56}(7), (1989), 675--693.
   
   \bibitem{herceg1991} {\sc D. Herceg, K. Surla}, Solving a nonlocal singularly perturbed problem by spline in tension, \emph{ Novi Sad J Math}, {\bf 21}(2), (1991), 119-132.

   \bibitem{herceg1998} {\sc D. Herceg, K. Surla, S. Rapaji\'{c}}, Cubic spline difference scheme on a mesh of a Bakhvalov type,   
                                \emph{Novi Sad J Math}, {\bf 28}(3), (1998), 41--49.
                                
                                
   \bibitem{herceg2000} {\sc D. Herceg, H. Mali\v{c}i\'c, I. Liki\'c}, On a finite difference analogue of fourth order for boundary value problem,  
                               \emph{Novi Sad J Math}, {\bf 30}(1), (2000), 197--203.
                               
                               
   \bibitem{herceg2000themesh} {\sc D. Herceg,  H. Mali{\v c}i{\' c}}, The Mesh Chopping Algorithm for Singularly Perturbed Boundary Value Problem,  
                                    \emph{Novi Sad J Math}, {\bf 30}(1), (2000), 155--164.
                                
                                    
   \bibitem{herceg2003} {\sc D. Herceg, M. Miloradovi\'{c}}, On numerical solution of semilinear singular perturbation problems by using the Hermite scheme on 
                            a new Bakhvalov-type mesh, \emph{ Novi Sad J Math}, {\bf 33}(1),  (2003), 145--162.
                            
                            
   \bibitem{herceg2003a} {\sc D. Herceg, {Dj}. Herceg}, On a fourth-order finite difference method for nonlinear two-point boundary value problems,  
                           \emph{Novi Sad J Math}, {\bf 33}(2), (2003), 173--180.
   
   \bibitem{samir2015construction} {\sc S. Karasulji{\'{c}}, E. Duvnjakovi{\'{c}}}, Construction of the Difference Scheme for Semilinear Reaction-Diffusion 
                        Problem on a Bakhvalov Type Mesh, \emph{In:The Ninth Bosnian-Herzegovinian Mathematical Conference, Sarajevo, BiH}, (2015).                 
             
                 
    
   \bibitem{samir2015uniformly} {\sc S. Karasulji{\'c}, E. Duvnjakovi{\'c}, H. Zarin}, Uniformly convergent difference scheme for a semilinear reaction-diffusion 
                      problem, \emph{ Adv Math Sci J}, {\bf 4}(2), (2015), 139--159.                           

   \bibitem{samir2017construction} {\sc S. Karasulji{\' c}, E. Duvnjakovi{\' c}, V. Pa{\v s}i{\' c}, E. Barakovi{\' c}, E.}, Construction of a global
             solution for the one dimensional singularly--perturbed boundary value problem, \emph{Journal of Modern Methods in Numerical Mathematics}, 
             {\bf 8}(1--2), (2017), 52--65.
                  
  \bibitem{samir2018uniformly} {\sc S. Karasulji{\' c}, E. Duvnjakovi{\' c}, E. Memi{\' c}}, Uniformly Convergent Difference Scheme for 
                    a Semilinear Reaction-Diffusion Problem on a Shishkin Mesh, \emph{Advances in Mathematics: Scientific Journal}, 
                    {\bf 7}, (1), (2018), 23--38.
    
  
   \bibitem{kopteva2001} {\sc  N. Kopteva, T. Lin{\ss}}, Uniform second-order pointwise convergence of a central difference approximation 
              for a quasilinear convection-diffusion problem, \emph{J Comput Appl Math}, {\bf 137}(2), (2001), 257--267.
              
   \bibitem{kopteva2001robust} {\sc N. Kopteva, M. Stynes}, A robust adaptive method for a quasi-linear one-dimensional convection-diffusion problem, 
                                 \emph{ SIAM J Numer Anal}, {\bf 39}(4), (2001), 1446--1467. 
               
   \bibitem{kopteva2004} {\sc N. Kopteva, M. Stynes}, Numerical analysis of a singularly perturbed nonlinear reaction--diffusion problem with multiple solutions,  
                              \emph{Appl Numer Math}, {\bf 51}(2), (2004), 273--288.
                       
   \bibitem{kopteva2007maximum} {\sc N. Kopteva}, Maximum norm error analysis of a 2d singularly perturbed semilinear reaction-diffusion problem, \emph{ Math Comp},
                                  {\bf 76}(258), (2007), 631--646.
                              
                              
                              
   \bibitem{kopteva2009} {\sc N. Kopteva, M. Pickett, H. Purtill}, A robust overlapping Schwarz method for a singularly perturbed semilinear reaction--diffusion 
                                problem with multiple solutions.  \emph{Int J Numer Anal Model}, {\bf 6}, (2009), 680--695.
               
               
   \bibitem{linss2000uniform} {\sc T. Lin{\ss}, H. G. Roos, R. Vulanovi{\'c}},  Uniform pointwise convergence on Shishkin-type meshes for quasi-linear 
                         convection-diffusion problems.  \emph{SIAM J Numer Anal}, {\bf 38}(3), (2000), 897--912.
           
           
   \bibitem{linss2012approximation} {\sc T. Lin{\ss}, G. Radojev, H. Zarin}, Approximation of singularly perturbed reaction-diffusion problems by quadratic $C^1$-splines.
                          \emph{Numer Algorithms}, {\bf 61}(1), (2012), 35--55.
                        
                        
   \bibitem{lorenz1982stability} {\sc J. Lorenz}, Stability and monotonicity properties of stiff quasilinear boundary problems, \emph{ Novi Sad J Math}, {\bf 12}, 
                                         (1982), 151--176.
    
    
   \bibitem{niijima} {\sc K. Niijima}, A uniformly convergent difference scheme for a semilinear singular perturbation problem, \emph{Numer Math}, {\bf 43}(2),
                            (1984), 175--198.
                                
   
   \bibitem{ortega2000} {\sc J. M. Ortega, W.C. Rheinboldt}, {\it Iterative Solution of Nonlinear Equations in Several Variables},   Philadelphia, PA, USA, SIAM, (2000).
    
   
   \bibitem{riordan1986uniformly} {\sc E. O'Riordan, M. Stynes},  A uniformly accurate finite-element method for a singularly perturbed one-dimensional reaction-diffusion
                       problem, \emph{ Math Comp}, {\bf 47}(176), (1986),  555--570.
   
   
                                 
   \bibitem{riordan1986uniform} {\sc E. O'Riordan, M. Stynes}, $L^1$ and $L^{\infty}$ Uniform Convergence of a Difference Scheme for a Semilinear Singular 
                     Perturbation Problem, \emph{ Numer Math}, {\bf 50}, (1986/87), 519-532.                
                     
   \bibitem{roos1990} {\sc H. G. Roos}, Global uniformly convergent schemes for a singularly perturbed boundary-value problem using patched base spline-functions, 
                          \emph{J Comput  Appl Math}, {\bf 29}(1), (1990), 69--77.
                    
   \bibitem{shishkin1988grid} {\sc G. I. Shishkin}, Grid approximation of singularly perturbed parabolic equations with internal layers, \emph{Sov J Numer Anal M Russ J  
                Numer Anal Math Model}, {\bf 3}(5), (1988), 393--408.
                
   \bibitem{stynes1987adaptive} {\sc M. Stynes}, An Adaptive Uniformly Convergent Numerical Method for a Semilinear Singular Perturbation Problem,   
                                      \emph{SIAM J Numer Anal}, {\bf 26}(2), (1987), 442-455.
             
   \bibitem{stynes1996} {\sc G. Sun, M. Stynes}, A uniformly convergent method for a singularly perturbed semilinear reaction-diffusion problem with multiple solutions, 
                             \emph{Math Comp}, {\bf 65}(215), (1996), 1085--1109.
                         
   \bibitem{stynes2006numerical} {\sc M. Stynes, N. Kopteva}, Numerical analysis of singularly perturbed nonlinear reaction-diffusion problems with multiple solutions,  
                                  \emph{Comput Math Appl}, {\bf 51}(5), (2006), 857--864.
                    
                    
   \bibitem{surla1996} {\sc K. Surla, Z. Uzelac}, A uniformly accurate difference scheme for singular perturbation problem,  
                         \emph{Indian J Pure Appl Mat}, {\bf 27}(10), (1996), 1005--1016.
                  
                  
   \bibitem{surla2003}{\sc K. Surla, Z. Uzelac}, On Stability of Spline Difference Scheme for Reaction-Diffusion Time-Dependent Singularly Perturbed Problem, 
                                    \emph{Novi Sad J Math}, {\bf 33}(2), (2003), 89-94.
                                
   \bibitem{vulanovic1983}{\sc R.  Vulanovi\'c}, On a numerical solution of a type of singularly perturbed boundary value problem by using a special discretization mesh, 
                               \emph{ Novi Sad J Math}, {\bf 13}, (1983), 187--201.            
  
   \bibitem{vulanovic1989}{\sc R.  Vulanovi\'c}, Mesh generation methods for numerical solution of quasilinear singular perturbation problems,   
                                \emph{Novi Sad J Math}, {\bf 19}(2), (1989), 171--193.
                 
   \bibitem{vulanovic1991second}{\sc R.  Vulanovi{\'c}}, A second order numerical method for non-linear singular perturbation problems without turning points,   
                                \emph{USSR Comp Math}, {\bf 31}(4), (1991), 522--532.
                          
   \bibitem{vulanovic1993} {\sc R. Vulanovi\'c}, On numerical solution of semilinear singular perturbation problems by using the Hermite scheme,  
                               \emph{Novi Sad J Math}, {\bf 23}(2), (1993), 363--379.
                          
   \bibitem{vulanovic2004}{\sc R. Vulanovi\'c}, An almost sixth-order finite-difference method for semilinear singular perturbation problems, 
                             \emph{Comput Methods Appl Math}, {\bf 4}(3), (2004), 368--383.
               
\end{thebibliography}
\end{document}